\newcommand{\R}{\mathbb{R}}
\newcommand{\N}{\mathbb{N}}
\theoremstyle{plain}
\newtheorem{theorem}{Theorem}[section]
\newtheorem{lemma}[theorem]{Lemma}
\newtheorem{proposition}[theorem]{Proposition}
\newtheorem{corollary}[theorem]{Corollary}
\theoremstyle{definition}
\newtheorem{definition}[theorem]{Definition}
\newtheorem{example}[theorem]{Example}
\theoremstyle{remark}
\newtheorem{remark}[theorem]{Remark}
\DeclareMathOperator{\gra}{graph}
\DeclareMathOperator{\SVIP}{SVIP}
\DeclareMathOperator{\MVIP}{MVIP}
\newcommand{\tos}{\rightrightarrows} 
\DeclareMathOperator*{\conv}{conv}
\newcommand{\inner}[2]{\langle #1,#2 \rangle}
\title{Existence and uniqueness of maximal elements for preference relations: Variational  approach}
\author{ 
O. Bueno
\thanks{Universidad del Pac\'ifico.  Lima, Per\'u. Email: \texttt{\{o.buenotangoa, cotrina\_je,garcia\_yv\}@up.edu.pe}} 
\and J. Cotrina\footnotemark[2] \and Y. Garc\'ia \footnotemark[3]
}
\begin{document}

\maketitle

\begin{abstract}

In this work, we reformulate the problem of existence of maximal elements for preference relations as a variational inequality problem in the sense of Stampacchia. Similarly, we establish the uniqueness of maximal elements using a variational inequality problem in the sense of Minty. In both of these approaches, we use the normal cone operator to find existence and uniqueness results, under mild assumptions.
In addition, we provide an algorithm for finding such maximal elements, which is inspired by the steepest descent method for minimization. Under certain conditions, we prove that the sequence generated by this algorithm converges to a maximal element.

\bigskip

\noindent{\bf Keywords:  Maximal elements, Stampacchia variational inequality, Minty variational inequality}

\bigskip

\noindent{{\bf MSC (2010)}: 91B16, 49J40, 49J53   } 

\end{abstract}
\section{Introduction}

The theory of preference relations is one of the main tools in the study of consumer demand.
A preference relation is described by means of a binary relation, which is traditionally derived from a utility function when the relation satisfies some properties (see for instance \cite{DEBREU}). In that sense, finding a maximal element is equivalent to solving the maximization problem of the associated utility function. However, it is well-known that there are preference relations that are not derived from a utility function, for instance, the lexicographic ordering~\cite{Mas-Colell}. 
We can find in the literature a lot of works concerning the existence of maximal elements for preference relations, which are not necessarily transitive nor complete, see for instance~\cite{CAMPBELL1990,Mehta1984,MILASI2019,MILASI2021,Rader1978,Shafer1974} and the references therein. It is interesting to note that many of these results are consequences of Browder's theorem~\cite[Theorem~1]{Browder1968}.

On the other hand, variational inequalities play an important role in the study of optimization problems, Nash games, saddle problems, among others, see~\cite{Facchinei-book}.  Recently, Milasi \emph{et al.}~\cite{MILASI2019} reformulated the competitive economic equilibrium problem governed by preference relations as a quasi-variational inequality problem. Later on, Milasi and Scopelliti~\cite{MILASI2021} reformulated the problem of finding maximal elements for preference relations as a suitable variational inequality problem. However, we discovered some inconsistencies in these works, see Remarks~\ref{Milasi-e} and~\ref{Milasi-e2}. 

In this work, we aim to study the existence and uniqueness of maximal elements by considering the Stampacchia and Minty variational inequality. 
Very recently, Donato and Villanacci~\cite{donato_variational_2023} also addressed the existence of maximal elements in a similar way. However, our existence result is not a consequence of their results, see Remark~\ref{rem:new3.15}.
In addition, we present an algorithm to obtain these elements, whenever the preference relation is defined on the whole space. This algorithm is inspired by the classical steepest descent method for minimization of quasiconvex functions~\cite{dacruzetal2011}, but instead of the gradient, we use the so-called \emph{Plastria-like normal cone} of the strict upper contour.

In Section~\ref{sec:prelim}, we present definitions and notations of preference relations. Section~\ref{sec:maximal} deals with the existence and uniqueness of maximal elements by considering the variational reformulation. Finally, in Section~\ref{sec:algo} we provide an optimization algorithm for preference relations that satisfy certain conditions.

\section{Preliminaries}\label{sec:prelim}
Consider $X$ a non-empty set and $\succeq$ a binary relation on $X$. 
For each $x\in X$ we  consider the sets
\[
U(x):=\{y\in X\::\: y\succeq x\},\quad L(x):=\{y\in X\::\: x\succeq y\},
\]
\[
U^s(x):=\{y\in X\::\: y\succ x\}\text{ and }L^s(x):=\{y\in X\::\: x\succ y\},
\]
where $\succ $ is the asymmetric part of $\succeq$, that is, $x\succ y$ means $x\succeq y$ but not $y\succeq x$. 
The sets $U(x)$ and $U^s(x)$ (resp. $L(x)$ and $L^s(x)$) are called the \emph{upper} and \emph{strictly upper} (resp. \emph{lower}) \emph{contour set}.

We recall that a binary relation $\succeq$ \cite{Mas-Colell} is said to be:
\begin{itemize}
	\item \emph{Complete} if, for any $x,y\in X$, either $x\succeq y$ or $y\succeq x$.
	\item \emph{Transitive} if, for any $x,y,z\in X$, the following implication holds
	\[
	(x\succeq y~\wedge~y\succeq z)\Rightarrow x\succeq z.
	\]
	\item \emph{Reflexive} if, for any $x\in X$, we have $x\succeq x$.
	\item \emph{Rational} if, it is complete and transitive.
\end{itemize}

\begin{definition}
	Let $m\in\N$. A relation $\succeq$ on $X$ has the $m$-\emph{FIP} if, for any $x_1,x_2,\ldots,x_m\in X$, there exists $x\in X$ such that $x\succeq x_i$, for all $i=1,2,\ldots,m$. If $\succeq$ has the $m$-FIP, for all $m\in \N$, then it has the \emph{finite intersection property}.
\end{definition}

A few remarks are needed.
\begin{remark}
	\begin{enumerate}
		\item The relation $\succeq$ has the finite intersection property if, and only if,  the family of sets $\{U(x)\}_{x\in X}$ has the finite intersection property.
		\item If $\succeq$ is complete, then it has the $2$-FIP. The converse is not true in general, see Example \ref{2-fip-notcomplete}.
	\end{enumerate}
\end{remark}

\begin{proposition}\label{trans-fip}
	Let $X$ be a non-empty set and $\succeq$ be a relation on $X$.
	If $\succeq$ is transitive and has the $2$-FIP, then $\succeq$ has the finite intersection property.
\end{proposition}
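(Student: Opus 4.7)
The plan is to prove this by induction on $m$, using the $2$-FIP as the engine for combining elements and transitivity as the glue that preserves the dominance already established.

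For the base case, the $2$-FIP directly gives the $2$-FIP (and the $1$-FIP is immediate by choosing $x_1 = x_2$ in the $2$-FIP, yielding some $x$ with $x \succeq x_1$).

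For the inductive step, I would assume the $m$-FIP holds and prove the $(m+1)$-FIP. Given $x_1, x_2, \ldots, x_{m+1} \in X$, the induction hypothesis applied to $x_1, \ldots, x_m$ produces some $y \in X$ with $y \succeq x_i$ for every $i = 1, \ldots, m$. Then the $2$-FIP applied to the pair $\{y, x_{m+1}\}$ yields some $x \in X$ with $x \succeq y$ and $x \succeq x_{m+1}$. Finally, transitivity applied to $x \succeq y$ and $y \succeq x_i$ gives $x \succeq x_i$ for each $i = 1, \ldots, m$, and combined with $x \succeq x_{m+1}$ we obtain the desired element for the family $x_1, \ldots, x_{m+1}$.

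There is no real obstacle here: the argument is a standard two-step induction where transitivity is used exactly once per step to propagate dominance through the intermediate element $y$. The only point worth being careful about is that we must not assume reflexivity, so the $1$-FIP should be derived from the $2$-FIP rather than taken for granted, as noted above.
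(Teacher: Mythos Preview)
Your proposal is correct and follows essentially the same approach as the paper: both proceed by induction, handle the base cases $m=1,2$ via the $2$-FIP, and for the inductive step combine the element obtained from the induction hypothesis with the new point using the $2$-FIP, then invoke transitivity. Your explicit remark that the $1$-FIP must be derived from the $2$-FIP (rather than from reflexivity) is a nice touch that the paper leaves implicit.
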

\begin{proof}
	We prove this by induction. For $m=1,2$ it follows from the $2$-FIP of $\succeq$.
	
	Assume now that the family of sets $\{U(x)\}_{x\in X}$ satisfies the finite intersection property for $n-1$ elements of $X$ and consider $x_1,\dots,x_n\in X$.
	For $x_1,x_2,\dots,x_{n-1}$ there exists $x\in X$ such that $x\succeq x_i$, for all $i\in\{1,2,\dots,n-1\}$. Thus, again by the $2$-FIP of $\succeq$ there exists $z\in X$ such that $z\succeq x_n$ and $z\succeq x$. By transitivity of $\succeq$, it follows that 
	$z\succeq x_i$ for all $i\in\{1,2,\dots,n\}$. \qed
\end{proof}

In particular, since completeness implies the 2-FIP, we obtain the following corollary.
\begin{corollary}
	Let $\succeq$ be a rational relation on a non-empty set $X$. Then $\succeq$ has the finite intersection property.
\end{corollary}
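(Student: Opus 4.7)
The plan is to simply combine the definition of rationality with the earlier results in the excerpt, so there is almost nothing to do beyond citing what has already been established. Since a rational relation is, by definition, both complete and transitive, I would first invoke the remark (item 2) that completeness implies the $2$-FIP to obtain that $\succeq$ has the $2$-FIP. Then, since $\succeq$ is also transitive, I would immediately apply Proposition \ref{trans-fip} to conclude that $\succeq$ has the finite intersection property.

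In other words, the whole proof is a one-line chain: rational $\Rightarrow$ (complete and transitive) $\Rightarrow$ ($2$-FIP and transitive) $\Rightarrow$ finite intersection property, where the first implication is the definition, the second uses the remark, and the third is exactly Proposition \ref{trans-fip}. There is no genuine obstacle here; the corollary is explicitly flagged in the paragraph above the statement as an immediate consequence (\emph{``since completeness implies the $2$-FIP, we obtain the following corollary''}). If anything, the only subtlety to mention is that we are using the non-trivial half of the remark, namely that completeness is sufficient (not equivalent) to the $2$-FIP — which is clear because, given any $x_1,x_2\in X$, completeness yields either $x_1\succeq x_2$ or $x_2\succeq x_1$, so one of the two already serves as the upper bound $x$ required by the $2$-FIP.

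Accordingly, my write-up would be a two-sentence proof with no display math, just a reference to the remark and to Proposition \ref{trans-fip}, ending with \texttt{\textbackslash qed}. No induction or case analysis is needed, since the inductive work has already been absorbed into Proposition \ref{trans-fip}.
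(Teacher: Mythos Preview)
Your proposal is correct and matches the paper's approach exactly: the paper does not even write out a proof, merely noting in the preceding sentence that completeness gives the $2$-FIP so the corollary follows from Proposition~\ref{trans-fip}. Your two-sentence write-up is precisely what is intended.
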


We now present some examples. Example~\ref{2-fip-notcomplete} shows that neither completeness nor transitivity of $\succeq$ are necessary to guarantee the finite intersection property. On the other hand, Example~\ref{ex:ex23} shows that completeness, 2-FIP or transitivity, separately, are not sufficient to obtain the finite intersection property.
\begin{example}\label{2-fip-notcomplete}
	Consider $X=[0,4]$ and define the relation $\succeq$ on $X$ as
	\[
	x\succeq y \text{ if, and only if, } \dfrac{y}{2}+2\leq x\leq 4 \text{ and }(x,y)\neq (7/2,2).
	\]
	Clearly $4\in\displaystyle\bigcap_{x\in X}U(x)$, hence $\succeq$ has the finite intersection property. However, it is straightforward to verify that $\succeq$ is neither reflexive, nor complete. It is not transitive either, because $7/2\succeq 3$ and $3\succeq 2$ but $7/2\not\succeq 2$.
\end{example}
\begin{example}\label{ex:ex23}
	Consider $X=[0,1]$ and, for $j\in\{a,b\}$, define the relation $\succeq_j$ as 
	\[
	x\succeq_{j} y\text{ if, and only if, }(x,y)\text{ belongs to the graph in Figure~\ref{FIP-transitivity}, item~$j$}.
	\]
	Denote with $U_j(x)$, $j\in\{a,b\}$, the respective upper contour set of $\succeq_j$.
	\begin{figure}[h!]
		\centering
		\begin{tabular}{cc}
			\begin{tikzpicture}[>=latex,scale=1.75]
				\fill[gray!30](0,1)--(0,0)--(1,1)--(0,1);
				\draw[<->](0,1.5)--(0,0)--(1.5,0);
				\draw[line width=0.8](0,1)--(0,0)--(1,1)--(0,1);
				\draw(0,1)node[left]{$1$};
				\draw(1,0)node[below]{$1$};
				\draw[dotted](0,1)--(1,1)--(1,0);
				\draw[fill=white](0,0.5)circle(1.25pt)node[left]{$1/2$};
				\draw[fill=white](0.5,1)circle(1.25pt);
				\fill(1,0.5)circle(1.25pt);
				\fill(0.5,0)circle(1.25pt)node[below]{$1/2$};
			\end{tikzpicture}
			&
			\begin{tikzpicture}[>=latex,scale=1.75]
				\fill[gray!30] (0,0.5) -- (0,0) -- (0.5,0.5) -- cycle;
				\fill[gray!30] (1,0.5) -- (0.5,0.5) -- (1,1) -- cycle;
				\draw[<->](0,1.5)--(0,0)--(1.5,0);
				\draw[line width=0.8](0,0.5)--(0,0)--(1,1)--(1,0.5) -- cycle;
				\draw(0,1)node[left]{$1$};
				\draw(1,0)node[below]{$1$};
				\draw[dotted](0,1)--(1,1)--(1,0);
				\draw[dotted] (0.5,0.5)--(0.5,0) node[below]{$1/2$};
				\node[left] at (0,0.5) {$1/2$};
			\end{tikzpicture}\\
			a) Complete relation & b) Transitive relation
		\end{tabular}
		\caption{}\label{FIP-transitivity}
	\end{figure}
	Note that $\succeq_a$ is complete (hence, it has the 2-FIP). However it is not transitive, since $0\succeq_a 1\succeq_a 1/2$ and $0 \not\succeq_a 1/2$, and it does not have the finite intersection property, because 
	\[
	U_a(0)\cap U_a(1/2)\cap U_a(1) =\{0,1/2\}\cap (]0,1/2]\cup\{1\})\cap ([0,1]\setminus\{1/2\})=\emptyset.
	\]
	On the other hand, it is straightforward to verify that $\succeq_b$ is transitive. Moreover, since $U_b(0)\cap U_b(1)=\emptyset$, $\succeq_b$ does not have the 2-FIP, hence it is not complete nor it has the finite intersection property.
\end{example}

We will now recall some definitions dealing with convexity, which will play an important role later.  Let $X$ be a convex set in a vector space, the relation $\succeq$ is called \emph{convex} (resp. \emph{convex$^s$}), if  $U(x)$ (resp. $U^s(x)$) is convex, for all $x\in X$. These two notions coincide whenever the relation $\succeq$ is rational, as it was established without proof in~\cite[Proposition 2.2, part $(i)$]{MILASI2019}. We present a proof of this fact, to make our work self-contained.
\begin{proposition}\label{le0}
	Assume that $X$ is convex and the relation $\succeq$ is rational. Then $\succeq$ is convex if, and only if, it convex$^s$.
\end{proposition}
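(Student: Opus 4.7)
My plan is to prove both implications by contradiction, exploiting completeness (which in particular forces reflexivity, $x\succeq x$ for every $x$, by taking $x=y$ in the definition) together with transitivity.

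For the forward direction, suppose $U(z)$ is convex for every $z$, and pick $y_1,y_2\in U^s(x)$ and $\lambda\in[0,1]$; set $y_\lambda:=\lambda y_1+(1-\lambda)y_2$. Since $U^s(x)\subseteq U(x)$ and $U(x)$ is convex, we immediately get $y_\lambda\succeq x$. By completeness I may assume without loss of generality that $y_1\succeq y_2$; combined with reflexivity this gives $y_1,y_2\in U(y_2)$, so convexity of $U(y_2)$ forces $y_\lambda\succeq y_2$. If $x\succeq y_\lambda$ were also to hold, transitivity would yield $x\succeq y_2$, contradicting $y_2\succ x$. Hence $y_\lambda\succ x$.

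For the reverse direction, assume $U^s(z)$ is convex for every $z$ and take $y_1,y_2\in U(x)$ with $\lambda\in[0,1]$. Suppose, for contradiction, that $y_\lambda\notin U(x)$; by completeness this means $x\succ y_\lambda$. For each $i\in\{1,2\}$ I claim $y_i\succ y_\lambda$: indeed $y_i\succeq x\succ y_\lambda$ gives $y_i\succeq y_\lambda$ by transitivity, and if in addition $y_\lambda\succeq y_i$ were to hold, transitivity would give $y_\lambda\succeq x$, contradicting $x\succ y_\lambda$. Thus $y_1,y_2\in U^s(y_\lambda)$, and the convexity$^s$ assumption applied to $z=y_\lambda$ yields $y_\lambda\in U^s(y_\lambda)$, i.e., $y_\lambda\succ y_\lambda$, contradicting reflexivity.

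The only genuinely nontrivial step is choosing the right ``test point'' in the reverse implication: to invoke convexity$^s$ one needs some $z$ with $y_1,y_2\in U^s(z)$, and the trick is to take $z=y_\lambda$ itself, so that the resulting conclusion $y_\lambda\in U^s(y_\lambda)$ collapses against reflexivity. The rest is a routine combination of completeness, transitivity, and the inclusion $U^s(x)\subseteq U(x)$.
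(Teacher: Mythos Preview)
Your proof is correct and follows essentially the same route as the paper's: in both directions you (and the paper) use completeness to set up a comparison, apply the appropriate convexity hypothesis to a well-chosen upper contour set ($U(y_2)$ in the forward direction, $U^s(y_\lambda)$ in the reverse), and then reach a contradiction via transitivity. Your argument is slightly more verbose---you spell out the intermediate strict-preference steps that the paper compresses into ``by transitivity''---and your initial observation $y_\lambda\in U(x)$ in the forward direction is harmless but redundant, since ``not $x\succeq y_\lambda$'' together with completeness already yields $y_\lambda\succeq x$.
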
 
\begin{proof}
	If $U^s(x)=\emptyset$, there is nothing to prove. We now consider $U^s(x)\neq\emptyset$. Let $y,z\in U^s(x)$ and $t\in [0,1]$.  Since $\succeq$ is complete, without loss of generality we can assume $y\succeq z$. Thus, $y,z\in U(z)$ which is a convex set. This implies $ty+(1-t)z\succeq z$ and by transitivity we have $ty+(1-t)z\succ x$.
	
	Reciprocally, consider $y,z\in U(x)$ and $t\in [0,1]$. If there exists $t_0\in]0,1[$ such that 
	$t_0y+(1-t_0)z\notin U(x)$ then $x\succ t_0y+(1-t_0)z$, due to $\succeq$ being complete. Now, by transitivity of $\succeq$, we have that $y,z\in U^s(t_0y+(1-t_0)z)$. However, since $U^s(t_0y+(1-t_0)z)$ is convex we get a contradiction. \qed
\end{proof}

The following examples show that the previous result is not true in general when we drop the rationality.  

\begin{example}
	Consider $X=[0,1]$ and $\succeq$ defined as
	\[
	x\succeq y\text{ if and only if }x=0\text{ or }y=x.
	\]
	It is easy to see that $U(x)=\{0,x\}$, for any $x\in X$. Thus, $\succeq$ is not convex. However,
	\[
	U^s(x)=\begin{cases}
		\emptyset,&x=0,\\
		\{0\},&\text{otherwise},
	\end{cases}
	\]
	which is a convex set, for all $x\in X$. Hence, it is convex$^s$. 
\end{example}

\begin{example}
	Consider $X=[0,1]$ and $\succeq$ defined as
	\[
	x\succeq y\text{ if, and only if, }y=0\text{ or }\begin{cases}
		x=y,&y\in ]0,1]\setminus\{1/2\},\\
		x=0,&y=1/2.
	\end{cases}
	\]
	It is not difficult to see that 
	\[
	U(x)=\begin{cases}
		[0,1],&x=0,\\
		\{x\},&x\in]0,1]\setminus\{0,1/2\},\\
		\{0\},&x=1/2,
	\end{cases}\text{ and }
	U^s(x)=\begin{cases}
		\emptyset,&x\in]0,1],\\
		]0,1/2[\cup]1/2,1],&x=0.
	\end{cases}
	\]
	Thus,  $\succeq$ is convex but it is not convex$^s$.
\end{example}

We finish this section with some topological definitions for binary relations and correspondences. 
Given $X$ a topological space, a relation $\succeq$ on $X$ is said to be \emph{upper} (resp. \emph{lower}) \emph{semicontinuous}, if the set $L^s(x)$ (resp. $U^s(x)$) is open, for all $x\in X$. Moreover, $\succeq$ is \emph{continuous}, if it is upper and lower semicontinuous.
Clearly, if $\succeq$ is complete; then it is upper (resp. lower) semicontinuous if, and only if, $U(x)$ (resp. $L(x)$) is closed, for all $x\in X$.

Let $V, W$ be nonempty sets. A \emph{correspondence} $T:V\tos W$ is an application from $V$ into $\mathcal{P}(W)$, that is, for $v\in V$, $T(v)\subset W$. 
We can see that the  upper contour $U$ and the strict upper contour $U^s$ as examples of correspondences from $X$ to itself.

Recall that a correspondence $T:\R^n\tos \R^m$ is 
\begin{itemize}
	\item \emph{lower hemicontinuous} at $x_0\in \R^n$ when, for any sequence $(x_k)_{k\in\N}$ converging to $x_0$ and any element $y_0$ of $T(x_0)$, there exists a sequence $(y_k)_{k\in\N}$ converging to $y_0$ such that $y_k\in T(x_k)$, for any $k\in\N$.
	\item \emph{upper hemicontinuous} at $x_0\in \R^n$ when, for any neighbourhood $\mathcal{W}$ of $T(x_0)$, there exists a neighbourhood $\mathcal{V}$ of $x_0$ such that $T(x)\subset \mathcal{W}$, for all $x\in \mathcal{V}$;
	\item \emph{lower} (respectively \emph{upper}) \emph{hemicontinuous} when it is lower (resp. upper) hemicontinuous at every $x_0\in \R^n$;
	\item \emph{closed}, if the set $\gra(T):=\{(x,y)\in \R^n\times \R^m\::\: y\in T(x)\}$ is a closed subset of $\R^n\times \R^n$.
\end{itemize} 
It is well known that if $T(\R^n)$ is bounded and $T$ is closed, then it is upper hemicontinuous.  
We also say that $T$ has \emph{open fibres} when the set $T^{-1}(y):=\{x\in \R^n\::\: y\in T(x)\}$ is open, for all $y\in \R^m$. It is easy to verify that if $T$ has open fibres, then it is lower hemicontinuous. 
In particular, if the binary relation $\succeq$ on $\R^n$ is upper (resp. lower) semicontinuous then $U^s:\R^n\tos\R^n$ (resp. $L^s$) has open fibres, hence it is lower (resp. upper) hemicontinuous.

\section{A Variational Approach for Maximal Elements}\label{sec:maximal}
This section is devoted to studying the existence and uniqueness of maximal elements for binary relations. First, we will introduce a normal cone correspondence, related to the strict upper contours of a binary relation, and establish some of its properties. Next, we will deal with the variational reformulation and the existence of maximal elements. Finally, using the Minty variational inequality, we will study the uniqueness of such maximal elements.

Let $\succeq$ be a binary relation on $\R^n$ with asymmetric part $\succ$, let $X$ be a non-empty subset of $\R^n$ and let $\hat x\in X$. Then
\begin{itemize}
	\item $\hat x$ is a \emph{maximum} of $X$, if $\hat{x}\succeq y$, for all $y\in X$;
	\item $\hat x$ is a \emph{maximal element} of $X$, if there is not $y\in X$ such that $y\succ \hat{x}$.
\end{itemize}
We denote by $\mathscr{M}_{\succeq}(X)$ and $\mathscr{ME}_{\succeq}(X)$ the set of maxima and maximal elements of $X$, respectively. It is not difficult to see that $\mathscr{M}_{\succeq}( X)\subset \mathscr{ME}_{\succeq}(X)$, and the equality holds under completeness of $\succeq$. Furthermore, it is clear that
\[
\mathscr{M}_{\succeq}(X)=\bigcap_{x\in X}U(x)\cap X\text{ and }\mathscr{ME}_{\succeq}(X)=\{x\in X\::\:U^s(x)\cap X=\emptyset\}.
\]

\subsection{Variational Inequalities and Normal Cones}
Consider $X$ a subset of $\R^n$ and $T:\R^n\tos \R^n$ a set-valued map. A vector $\hat{x}\in X$ is said to be a solution of the
\begin{itemize}
	
	\item \emph{Stampacchia variational inequality problem} if there exists $\hat{x}^*\in T(\hat{x})$, such that 
	\[
	\langle \hat{x}^*,y-\hat{x}\rangle\geq0,\quad\forall\,y\in X.
	\]
	\item \emph{Minty variational inequality problem}, if
	\[
	\langle y^*,\hat{x}-y\rangle\leq0,\quad\forall\,y\in X,\,\forall\,y^*\in T(y).
	\]
\end{itemize}
The solution sets of the Stampacchia and Minty variational inequality problems will be denoted by $\SVIP(T,X)$  and $\MVIP(T,X)$, respectively.

Given  a subset $A$ of  $\R^n$ and $x\in \R^n$, the \emph{normal cone} of $A$ at $x$ is the set 
\[
\mathscr{N}_A(x):=\begin{cases}
	\{x^*\in \R^n\::\:\langle x^*,y-x\rangle\leq0,~\forall y\in A\},&A\neq\emptyset,\\
	\R^n,&A=\emptyset.
\end{cases}
\]
It is usual in the literature to consider the above definition whenever $A$ convex and closed, and $x\in A$. However, we will not consider such conditions in this work. See Figure~\ref{normal-cone} for a geometric interpretation. Note that $\mathscr{N}_A(x)$ is always non-empty, because $0\in \mathscr{N}_A(x)$. Moreover, it is a closed convex cone of $\R^n$.

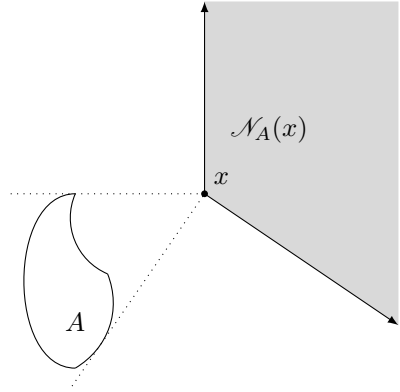
\begin{figure}[h]
	\centering
	\begin{tikzpicture}[scale=0.85,>=latex]
		\fill[gray!30](1,4)--(1,1)--(4,-1.05)--(4,4)--cycle;
		\draw[dotted](-2,1)--(1,1);
		\draw[dotted](-1.05,-2)--(1,1);
		\draw(-1,1) to [bend right=45] (-0.5,-0.25);
		\draw(-0.5,-0.25) to [bend left=40] (-1,-1.72);
		\draw(-1,1) to [bend right=90] (-1,-1.72);
		\draw(-1,-1)node[]{$A$};
		\fill[](1,1)circle(1.5pt)node[ above right]{$x$};
		\draw[->](1,1)--(1,4);
		\draw[->](1,1)--(4,-1.05);
		\draw(2,2)node[]{$\mathscr{N}_A(x)$};
	\end{tikzpicture}
	\caption{The set $\mathscr{N}_A(x)$}\label{normal-cone}
\end{figure}

Let $\succeq$ be a binary relation on $\R^n$.  The \emph{normal cone correspondence} $N:\R^n\tos \R^n$, associated to $\succeq$, is defined as
\begin{equation}6\label{eq:normalcone}
	N(x):=\mathscr{N}_{U^s(x)}(x),
\end{equation}
for all $x\in \R^n$.
\begin{lemma}
	Let $\succeq$ be a complete relation  the following equivalence holds:
	\[
	x^*\in N(x)~\Leftrightarrow~\left(\forall y\in \R^n,~\langle x^*,y-x\rangle>0~\Rightarrow~x\succeq y\right).
	\]
\end{lemma}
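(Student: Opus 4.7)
The plan is to unpack the definition of $N(x)=\mathscr{N}_{U^s(x)}(x)$ and argue both implications directly, using completeness as the key hinge that converts statements about $\succeq$ into membership of $U^s(x)$. Recall that $y\in U^s(x)$ means $y\succ x$, i.e.\ $y\succeq x$ and $x\not\succeq y$.

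For the forward direction, I would assume $x^*\in N(x)$ and pick an arbitrary $y\in\R^n$ with $\langle x^*, y-x\rangle>0$; the goal is to conclude $x\succeq y$. Suppose, aiming at a contradiction, that $x\not\succeq y$. By completeness, $y\succeq x$, so combined with $x\not\succeq y$ this gives $y\succ x$, i.e.\ $y\in U^s(x)$. The definition of the normal cone then forces $\langle x^*, y-x\rangle\leq 0$, contradicting the strict inequality. Hence $x\succeq y$. Note that this argument automatically covers the degenerate case $U^s(x)=\emptyset$ in which $N(x)=\R^n$: completeness then implies $x\succeq y$ for every $y\in\R^n$, so the right-hand implication is vacuously satisfied.

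For the reverse direction, I would assume the right-hand side. If $U^s(x)=\emptyset$, then $x^*\in N(x)=\R^n$ automatically. Otherwise, take any $y\in U^s(x)$; then $y\succ x$, and in particular $x\not\succeq y$. The contrapositive of the assumed implication yields $\langle x^*, y-x\rangle\leq 0$. Since $y$ was arbitrary in $U^s(x)$, this shows $x^*\in\mathscr{N}_{U^s(x)}(x)=N(x)$.

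There is no substantial obstacle here; the proof is essentially a definition chase. The only step that actually does work is the use of completeness to upgrade the failure of $x\succeq y$ into the strict preference $y\succ x$, which is exactly what allows one to invoke the normal-cone inequality. Without completeness the forward implication breaks down, so that hypothesis is used in an essential way.
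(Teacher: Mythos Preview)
Your proof is correct. The paper actually states this lemma without proof, presumably because it is a routine unwinding of the definition of $N(x)$ together with completeness; your definition-chase is exactly the natural argument and there is nothing to compare against.
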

In the following example, we explicitly calculate the normal cone correspondence for a certain preference relation.
\begin{example}\label{exN}
	Consider the relation $\succeq$ on $\R$  defined as
	\[
	x\succeq y\text{ if, and only if, }y=x\text{ or }y=1.
	\]
	It is clear that
	\[
	U^s(x)=\begin{cases}
		\emptyset,&x\neq 1,\\
		\R\setminus\{1\},&x=1,
	\end{cases}\text{ and }N(x)=\begin{cases}
		\R,&x\neq 1,\\
		\{0\},&x=1.
	\end{cases}
	\]
\end{example}

\begin{remark}\label{Milasi-e}
	Recently, Milasi \emph{et al.} \cite{MILASI2019},  considered $\succeq$ to be a convex$^s$ and non-satiated relation on $X\subset\R^n$ and defined $M_1:X\tos\R^n$ as
	$M_1(x):=\mathscr{N}_{U^s(x)}(x)$. Clearly, if $X=\R^n$ then $M_1$ coincides with $N$. 
	
	Now, Proposition 2.4 in \cite{MILASI2019} establishes that for all $x^*\in M_1(x)\setminus\{0\}$
	\[
	\langle x^*,y-x\rangle<0,\text{ for all }y\in U^s(x),
	\]
	provided that $\succeq$ is lower semicontinuous, convex$^s$ and with no maximal elements (this property is called \emph{non-satiated} in~\cite{MILASI2019}). However, this result as it is stated is not true. Consider for instance $X=\R\times\{0\}\subset\R^2$ and the relation $\succeq$ on $X$ defined as
	\[
	(x,0)\succeq (y,0) \text{ if, and only if, }x\geq y.
	\]
	It is not difficult to see that $\succeq$ is continuous, convex$^s$ and without maximal elements on $X$. Moreover, for each $(x,0)\in X$ we have
	\[
	U^s(x,0)=]x,+\infty[\times\{0\}\text{ and }M_1(x,0)=\{(x^*,y^*)\in\R^2\::\:x^*\leq 0\}.
	\]
	Since $(0,1)\in M_1(x,0)\setminus\{(0,0)\}$ we obtain that $\langle (0,1), (y,0)-(x,0)\rangle=0$, for all $(y,0)\in U^s(x,0)$.  
\end{remark}

The following result can be found as Proposition~24 in~\cite{donato_variational_2023}. However, our definition of $N$ allows to drop the condition of $U^s(x)$ being non-empty.
\begin{lemma}\label{le34}
	Let  $\succeq$ be a binary relation on $\R^n$ and $x\in\R^n$. If $U^s(x)$ is a convex set, then $N(x)\setminus\{0\}\neq\emptyset$. 
\end{lemma}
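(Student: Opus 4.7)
The plan is to split into two cases depending on whether $U^s(x)$ is empty. In the trivial case $U^s(x)=\emptyset$, the very definition of $N$ gives $N(x)=\R^n$, which obviously contains nonzero vectors.

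In the substantive case where $U^s(x)$ is nonempty and convex, the first step is to observe that $x\notin U^s(x)$: indeed, $x\in U^s(x)$ would mean $x\succ x$, which by the definition of the asymmetric part would require both $x\succeq x$ and $\lnot(x\succeq x)$, a contradiction. Thus $x$ is a point of $\R^n$ lying outside the convex set $U^s(x)$.

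Next I would apply a separation theorem for convex sets in $\R^n$: since $x$ is not in the convex set $U^s(x)$, there exists a nonzero $p\in\R^n$ with $\langle p, y\rangle\geq\langle p, x\rangle$ for all $y\in U^s(x)$. Setting $x^*:=-p$ immediately yields $\langle x^*, y-x\rangle\leq 0$ for all $y\in U^s(x)$, so $x^*\in N(x)\setminus\{0\}$, as required.

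The main obstacle is selecting the right version of the separation theorem. Textbook statements usually give \emph{strict} separation only when $x$ lies outside the \emph{closure} of the convex set, but in our setting $x$ will typically be a boundary point of $\overline{U^s(x)}$ (as in the classical utility setting, where the strict upper contour accumulates at its defining level). To handle this one must invoke either the supporting hyperplane theorem applied to $\overline{U^s(x)}$ at the boundary point $x$, or the more general proper separation result for a point and a (not necessarily closed) convex set in $\R^n$; either version produces the required nonzero functional. Once this is in place, no further properties of $\succeq$ are needed: the argument uses only the asymmetry of $\succ$ and the convexity hypothesis on $U^s(x)$.
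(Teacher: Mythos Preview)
Your argument is correct. The paper does not give its own proof of this lemma; it simply cites Proposition~24 in~\cite{donato_variational_2023} and remarks that the convention $N(x)=\R^n$ when $U^s(x)=\emptyset$ removes the need to assume non-emptiness. Your two-case split matches this exactly: the empty case is handled by the convention, and in the non-empty case your observation that $x\notin U^s(x)$ (from the irreflexivity of the asymmetric part) followed by separation of a point from a convex set in $\R^n$ is the standard route and presumably the one taken in the cited reference.

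Your discussion of the possible obstacle is also on point. In finite dimensions the cleanest way to phrase it is via proper separation: since $x\notin U^s(x)$ and $\text{ri}(U^s(x))\subset U^s(x)$, the relative interiors of $\{x\}$ and $U^s(x)$ are disjoint, so Rockafellar's proper separation theorem yields a nonzero $p$ with $\langle p,y\rangle\geq\langle p,x\rangle$ for all $y\in U^s(x)$, without any closedness or interior assumptions. This covers both the ``$x$ far from $U^s(x)$'' and ``$x$ on the boundary'' situations in one stroke.
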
	

The following result is a generalization of Proposition 2.3 in \cite{MILASI2019}.
\begin{proposition}\label{prop1}
	Let  $\succeq$ be a  relation on $\R^n$. If $\succeq$ is upper semicontinuous, then the map $N$ is closed.
\end{proposition}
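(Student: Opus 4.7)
The plan is to show directly that the graph of $N$ is closed in $\R^n\times\R^n$. Take a sequence $(x_k,x_k^*)\in\gra(N)$ converging to some $(x,x^*)$, and aim to conclude that $x^*\in N(x)$. The argument splits into two cases according to whether $U^s(x)$ is empty or not; this is where our relaxed definition of the normal cone (which assigns $\R^n$ to empty sets) pays off.

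In the trivial case $U^s(x)=\emptyset$, the definition gives $N(x)=\R^n$, so $x^*\in N(x)$ with nothing to prove. The substantive case is $U^s(x)\neq\emptyset$. Here I would fix an arbitrary $y\in U^s(x)$ and try to establish the inequality $\langle x^*,y-x\rangle\leq 0$. The idea is to exploit the upper semicontinuity of $\succeq$: as noted at the end of Section~\ref{sec:prelim}, this property guarantees that $U^s:\R^n\tos\R^n$ has open fibres and is therefore lower hemicontinuous. Applying lower hemicontinuity of $U^s$ at the point $x$ to the convergent sequence $x_k\to x$ and the element $y\in U^s(x)$ produces a sequence $y_k\to y$ with $y_k\in U^s(x_k)$ for all $k$ large enough (by passing to a tail if necessary).

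Now for each such $k$, since $x_k^*\in N(x_k)=\mathscr{N}_{U^s(x_k)}(x_k)$ and $y_k\in U^s(x_k)$, the defining inequality of the normal cone yields
\[
\langle x_k^*,y_k-x_k\rangle\leq 0.
\]
Passing to the limit, using continuity of the inner product together with $x_k\to x$, $y_k\to y$, and $x_k^*\to x^*$, we obtain $\langle x^*,y-x\rangle\leq 0$. Because $y\in U^s(x)$ was arbitrary, this shows $x^*\in\mathscr{N}_{U^s(x)}(x)=N(x)$, completing the proof.

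The main obstacle is essentially just bookkeeping: one must correctly invoke the implication ``upper semicontinuous $\Rightarrow$ $U^s$ has open fibres $\Rightarrow$ $U^s$ is lower hemicontinuous'' that was recorded in the preliminaries, and one must handle the possibly empty $U^s(x)$ separately (a genuine improvement over Proposition~2.3 of \cite{MILASI2019}, which implicitly assumes $U^s$ is nonempty). No convexity, compactness, or non-satiation hypothesis is needed, which is what makes this result a strict generalization of the earlier version.
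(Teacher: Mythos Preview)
Your proof is correct and follows essentially the same approach as the paper: both rely on the observation that upper semicontinuity of $\succeq$ makes $U^s$ lower hemicontinuous, after which a sequential argument closes the graph of $N$. The only difference is that the paper outsources this sequential argument to Proposition~25 of \cite{donato_variational_2023}, whereas you spell it out directly (in fact your argument is verbatim the $f=0$ case of the paper's own Proposition~\ref{pro:new}).
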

\begin{proof}
	Lower semicontinuity of $U^s$ is consequence of the upper semicontinuity of $\succeq$. Hence the proposition follows from Proposition~25 in~\cite{donato_variational_2023}. \qed
\end{proof}

Given a relation $\succeq$ on $\R^n$ we define 
\[
N^*(x):=\{x^*\in\R^n\::\:\langle x^*,y-x\rangle<0,\,\forall\,dy\in U^s(x)\},
\]
whenever $x\notin \mathscr{ME}_{\succeq}(\R^n)$, and $N^*(x):=\R^n$, otherwise.  It is important to note that $N^*(x)$ is a cone and $N^*(x)\subset N(x)$, for all $x\in X$.

\begin{proposition}
	Let $\succeq$ be a rational relation on $\R^n$.
	If $N^*(x)\neq\emptyset$ for all $x\in \R^n$; then $\succeq$ is convex.
\end{proposition}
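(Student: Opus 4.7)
The plan is to reduce the claim to showing $\succeq$ is convex$^s$ and then argue by contradiction, exploiting transitivity together with a non-zero vector in $N^*$ at a suitably chosen point.

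By Proposition~\ref{le0}, since $\succeq$ is rational, it is convex if and only if it is convex$^s$. So I would fix an arbitrary $x_0 \in \R^n$ and aim to show that $U^s(x_0)$ is convex. The case $U^s(x_0)=\emptyset$ is trivial, so assume $y,z \in U^s(x_0)$ and pick $t\in (0,1)$; set $w := ty+(1-t)z$. I want to show $w \in U^s(x_0)$, and I will suppose instead that $w \notin U^s(x_0)$ to reach a contradiction.

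The key observation is that completeness forces $x_0 \succeq w$, and then transitivity (applied to $y \succ x_0 \succeq w$ and to $z \succ x_0 \succeq w$) yields $y,z \in U^s(w)$. In particular $U^s(w) \neq \emptyset$, so $w$ is not a maximal element of $\R^n$, and the hypothesis gives a non-empty $N^*(w)$ in the ``non-trivial'' sense, namely consisting of vectors $w^*$ with $\langle w^*, v-w\rangle < 0$ for every $v \in U^s(w)$. Notice that the zero vector cannot satisfy this strict inequality when $U^s(w)$ is non-empty, so any such $w^*$ is automatically non-zero (though that fact is not even needed below).

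Now I would apply this strict inequality to the two points $y,z \in U^s(w)$. Using $w = ty+(1-t)z$, one computes $y-w = (1-t)(y-z)$ and $z-w = t(z-y)$, so
\[
(1-t)\,\langle w^*, y-z\rangle < 0 \quad\text{and}\quad t\,\langle w^*, z-y\rangle < 0,
\]
which is an immediate contradiction since $t,1-t>0$. Hence no such $w$ exists, $U^s(x_0)$ is convex, and we conclude by Proposition~\ref{le0}.

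I do not expect any serious obstacle: the only subtlety is making sure that $w$ is not a maximal element so that the restrictive (rather than trivial $N^*(w)=\R^n$) definition of $N^*$ is in force; this is delivered by the transitivity argument producing $y,z \in U^s(w)$. Everything else is a one-line algebraic reduction along the segment.
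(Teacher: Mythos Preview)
Your proof is correct, but it follows a genuinely different route from the paper's. The paper argues directly that each $U(x)$ is convex: for every $y\notin U(x)$ it picks some $y^*\in N^*(y)$, sets $H^-(y)=\{z:\langle y^*,z-y\rangle<0\}$, and shows---using completeness and transitivity exactly as you do---that $U(x)=\bigcap_{y\notin U(x)}H^-(y)$, an intersection of open half-spaces. You instead reduce first, via Proposition~\ref{le0}, to proving convex$^s$, and then run a local contradiction at a single point $w$ on the segment, using one vector $w^*\in N^*(w)$ and the elementary identity $t(y-w)+(1-t)(z-w)=0$. The underlying mechanism (rationality forces $y,z\in U^s$ at the ``bad'' point, and $N^*$ there yields strict separating inequalities) is the same; what differs is that the paper packages it as a global half-space representation of $U(x)$, while your argument is a pointwise contradiction that is shorter and avoids the intersection bookkeeping, at the cost of invoking Proposition~\ref{le0} up front. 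Both approaches use rationality in an essential way; your version makes the role of transitivity slightly more transparent in the step $y\succ x_0\succeq w\Rightarrow y\succ w$.
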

\begin{proof}
	Let $x\in\R^n$. If $U(x)=\emptyset$, there is nothing to prove. Now assume that $U(x)\neq\emptyset$, and take any $y\notin U(x)$. Choosing some $y^*\in N^*(y)$,
	define the set
	\[
	H^-(y)=\{z\in \R^n\::\:\langle y^*,z-y\rangle<0\}.
	\] 
	Clearly, $H^-(y)$ is convex and $y\notin H^-(y)$. Since $y$ was taken arbitrarily, we have the inclusion
	\[
	\bigcap_{y\notin U(x)} H^-(y)\subset U(x).
	\] 
	On the other hand, for $y\notin U(x)$, since $\succeq$ is complete, $x\succ y$. We now claim that $U(x)\subset H^-(y)$. Indeed, for all $z\in U(x)$ we have $z\succeq x\succ y$, hence $z\succ y$, by transitivity of $\succeq$. 
	Thus $\langle y^*,z-y\rangle<0$, because $y^*\in N^*(y)$, which implies $z\in H^-(y)$. Therefore
	\[
	U(x)= \bigcap_{y\notin U(x)} H^-(y).
	\] 
	The proposition follows. \qed
\end{proof}

The following proposition shows the relation between the operators $N$ and $N^*$.
\begin{proposition}\label{pro:37}
	Let  $\succeq$ be a lower semicontinuous relation on $\R^n$. Then
	\begin{enumerate}
		\item $N(x)=N^*(x)\cup\{0\}$, for all $x\in \R^n$.
		\item If, in addition, $\succeq$ is convex$^s$, then $N^*$ is non-empty valued.
	\end{enumerate}
\end{proposition}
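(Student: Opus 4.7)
My plan is to handle the two items separately, using the fact that lower semicontinuity of $\succeq$ is exactly the statement that $U^s(x)$ is open in $\R^n$ for every $x$.

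For item (1), I would first dispatch the trivial case: when $x\in\mathscr{ME}_\succeq(\R^n)$, we have $U^s(x)=\emptyset$, so by definition $N(x)=\R^n=N^*(x)=N^*(x)\cup\{0\}$. Assume now $U^s(x)\neq\emptyset$. The inclusion $N^*(x)\cup\{0\}\subset N(x)$ is immediate (a strict inequality implies the non-strict one, and $0$ always lies in a normal cone to a non-empty set). For the reverse inclusion, I would pick $x^*\in N(x)\setminus\{0\}$ and suppose, for contradiction, that there exists $y\in U^s(x)$ with $\langle x^*,y-x\rangle=0$ (it cannot be $>0$ since $x^*\in N(x)$). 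Because $\succeq$ is lower semicontinuous, $U^s(x)$ is open, so there is an open ball $B(y,r)\subset U^s(x)$. Since $x^*\neq 0$, I can translate $y$ by a small positive multiple of $x^*$, obtaining a point $y':=y+\varepsilon x^*\in U^s(x)$ for $\varepsilon>0$ small; then $\langle x^*,y'-x\rangle=\varepsilon\|x^*\|^2>0$, contradicting $x^*\in N(x)$. Hence every $y\in U^s(x)$ satisfies the strict inequality, so $x^*\in N^*(x)$.

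For item (2), I would again split on whether $U^s(x)$ is empty. If $x\in\mathscr{ME}_\succeq(\R^n)$, then $N^*(x)=\R^n$ is non-empty by definition. Otherwise, $U^s(x)$ is a non-empty convex set (by the assumption that $\succeq$ is convex$^s$), so Lemma~\ref{le34} yields some $x^*\in N(x)\setminus\{0\}$. By item~(1), $N(x)\setminus\{0\}\subset N^*(x)$, so $x^*\in N^*(x)$, and $N^*$ is non-empty valued.

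The only substantive step is the perturbation argument in item (1); all the rest is bookkeeping on definitions and appealing to Lemma~\ref{le34}. The key ingredients being combined are openness of the strict upper contour (from lower semicontinuity) and the fact that a non-zero vector in the normal cone cannot be tangent to an open set at a point where equality is attained on the supporting hyperplane. \qed
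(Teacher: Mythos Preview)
Your proof is correct. For item~(2) you do exactly what the paper does: invoke Lemma~\ref{le34} and feed its output through item~(1). For item~(1), the paper simply observes that $N(x)$ is the polar cone of the (open) set $U^s(x)-x$ and cites \cite[Exercise~6.22]{RW} to conclude that every non-zero element of this polar cone yields a strict inequality. Your perturbation argument ($y':=y+\varepsilon x^*$ inside an open ball around $y$) is precisely the elementary content behind that exercise, so the two approaches are the same in spirit; yours is just self-contained rather than deferring to an external reference.
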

\begin{proof}
\begin{enumerate}
	\item The lower semicontinuity of $\succeq$ implies that $U^s(x)$ is open. Since $N(x)$ is the polar cone of $U^s(x)\setminus\{x\}$, this item now follows using~\cite[Exercise~6.22]{RW}. 
	\item It follows from item~{\it 1.} and Lemma~\ref{le34}. \qed
\end{enumerate}
\end{proof}
The following example shows that it is not possible to drop the lower semicontinuity in the first part of the previous result.
\begin{example}
	Consider the relation $\succeq$ on $\R^2$ defined as
	\[
	(x,y)\succeq (a,b)\text{ if, and only if, }x\geq a \text{ and }y=b=0.
	\]
	It is not difficult to verify that
	\[
	U^s(x,y)=\begin{cases}
		\emptyset,&y\neq0 ,\\
		]x,+\infty[\times\{0\},&y=0.
	\end{cases}
	\]
	Thus, the relation $\succeq$ is not lower semicontinuous. Additionally, we can see that
	\[
	N(x,y)=\begin{cases}
		\R^2,&y\neq0,\\
		\{(x^*,y^*)\in\R^2\::\:x^*\leq0\},&y=0,
	\end{cases}
	\]
	and 
	\[
	N^*(x,y)=\begin{cases}
		\R^2,&y\neq0,\\
		\{(x^*,y^*)\in\R^2\::\:x^*<0\},&y=0.
	\end{cases}
	\]
	Thus,  $N(x,0)\neq N^*(x,0)\cup\{(0,0)\}$.
\end{example}

\subsection{Reformulation and Existence Results}

The following correspondence will allow us to reformulate the problem of finding maximal elements as a Stampacchia variational inequality problem.
Consider  the correspondence $T:\R^n\tos \R^n$ defined as
\begin{equation}\label{opeT}
	T(x):=\conv(N(x)\cap S[0,1]),
\end{equation}
where $N$ is the correspondence associated to $\succeq$, defined as in~\eqref{eq:normalcone}, and $S[0,1]$ denotes the unit sphere of $\R^n$. For instance, if $\succeq$ and $N$ are defined as in Example~\ref{exN}, then the correspondence $T$ associated to $N$ satisfies $T(x)=[-1,1]$, when $x\neq 1$, and $T(1)=\emptyset$.
 
The following proposition establishes a relation between $\succeq$ and $T$.
\begin{proposition}\label{two-parts}
	The following implications hold:
	\begin{enumerate}
		\item If $\succeq$ is upper semicontinuous on $\R^n$, then $T$ is closed. In particular $T$ is upper hemicontinuous.
		\item If $\succeq$ is convex$^s$ and lower semicontinuous on $\R^n$, then $T$ is non-empty valued.
	\end{enumerate}
\end{proposition}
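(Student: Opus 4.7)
The plan is to handle the two items separately, invoking Proposition~\ref{prop1} for the first part and Lemma~\ref{le34} for the second.

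For item~1, I would verify directly that $\gra(T)$ is closed. By Proposition~\ref{prop1}, upper semicontinuity of $\succeq$ already gives that $N$ is closed. Pick sequences $x_k\to x$ and $y_k\to y$ with $y_k\in T(x_k)$. By Carathéodory's theorem in $\R^n$, each $y_k$ may be written as
\[
y_k=\sum_{i=0}^{n}\lambda_k^{i}\,v_k^{i},\qquad \lambda_k^{i}\geq 0,\quad \sum_{i=0}^{n}\lambda_k^{i}=1,\quad v_k^{i}\in N(x_k)\cap S[0,1].
\]
Since the probability simplex in $\R^{n+1}$ and the unit sphere $S[0,1]$ are both compact, I can extract subsequences so that $\lambda_k^{i}\to\lambda^{i}$ and $v_k^{i}\to v^{i}\in S[0,1]$, with $\sum_{i}\lambda^{i}=1$. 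Closedness of $N$ then forces $v^{i}\in N(x)$, hence $v^{i}\in N(x)\cap S[0,1]$, and so $y=\sum_{i}\lambda^{i}v^{i}\in T(x)$. This gives closedness of $T$. Since $T(\R^n)\subseteq B[0,1]$ is bounded, the general fact recalled at the end of Section~\ref{sec:prelim} (closed + bounded range implies upper hemicontinuous) yields the in-particular clause.

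For item~2, I would split on whether $U^s(x)$ is empty. If $U^s(x)=\emptyset$, then by the definition of the normal cone $N(x)=\R^n$, so $T(x)=\conv(S[0,1])=B[0,1]\neq\emptyset$. Otherwise $U^s(x)$ is non-empty and, by the convex$^s$ hypothesis, convex; Lemma~\ref{le34} then furnishes some $x^*\in N(x)\setminus\{0\}$. Because $N(x)$ is a cone, the normalisation $x^*/\|x^*\|$ lies in $N(x)\cap S[0,1]$, so $T(x)\neq\emptyset$.

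The main technical point is the closedness in item~1: the Carathéodory representation must be combined with the compactness of both the probability simplex and the unit sphere in order to pass to the limit while keeping each coordinate in $N(x)\cap S[0,1]$. Item~2 is essentially bookkeeping, the only subtle observation being that lower semicontinuity is not strictly needed (Lemma~\ref{le34} suffices), but it is natural to keep it in the hypotheses so that Proposition~\ref{pro:37} is available in the sequel.
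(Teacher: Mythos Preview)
Your proof is correct. For item~1, you give a direct Carath\'eodory-plus-compactness argument to show $\gra(T)$ is closed, whereas the paper introduces the auxiliary correspondence $R$ with $\gra(R)=\gra(N)\cap(\R^n\times S[0,1])$, observes it is closed (hence upper hemicontinuous, having bounded range), and then invokes Theorem~17.35 in \cite{aliprantis06} to conclude that $T=\conv\circ R$ is upper hemicontinuous. Your route is more elementary and self-contained; the paper's is shorter but relies on an external black box. For item~2, the paper appeals to Proposition~\ref{pro:37}, item~{\it 2}, which itself unpacks to Lemma~\ref{le34} plus the relation $N=N^*\cup\{0\}$; you bypass this and call Lemma~\ref{le34} directly, correctly noting that lower semicontinuity is not actually used in establishing non-emptiness of $T$.
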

\begin{proof}
\begin{enumerate}
\item Consider the correspondence $R:\R^n\tos \R^n$ such that $\gra(R)=\gra(N)\cap \big(\R^n\times S[0,1]\big)$. By Proposition \ref{prop1}, we deduce that $R$ is closed. Moreover, it is upper semicontinuous due to the closed graph theorem. Since
$T(x)=\conv(R(x))$, for all $x\in X$, the result follows from Theorem~17.35 in \cite{aliprantis06}.
\item It follows from Proposition~\ref{pro:37}, item~{\it 2}. \qed
\end{enumerate} 
\end{proof}

\begin{remark}\label{Milasi-e2} 
	Milasi and Scopelliti~\cite{MILASI2021} defined the map $M_2:\R^n\tos\R^n$ as
	\[
	M_2(x):=\begin{cases}
		\mathscr{N}_{U^s(x)}(x),&x\in X,\\
		\emptyset,&x\notin X.
	\end{cases}
	\]
	The authors also introduced a correspondence similar to $T$ given in~\eqref{opeT}. More precisely, they defined the map $G:X\tos\R^n$ as
	\[
	G(x):=\begin{cases}
		\conv(M_2(x)\cap S[0,1]),&U^s(x)\neq\emptyset,\\
		\overline{B}(0,1),&U^s(x)=\emptyset, 
	\end{cases}
	\]
	where $\overline{B}(0,1)$ is closed unit ball of $\R^n$. 
	It is clear that $T$ and $G$ coincide when $X=\R^n$.
	
	Now, part b) of Theorem 3 in~\cite{MILASI2021} establishes that any solution of the variational inequality problem (in the sense of Stampacchia) associated to $G$ and $K\subset X$ is a maximal element of $\succeq$ on $K$. However, this is not true as it is stated. Consider for instance the relation $\succeq$ given in Remark~\ref{Milasi-e}, and $K=[0,1]\times\{0\}\subset X$. The map $G:X\tos\R^2$ is given by
	\[
	G(x,0)=\{(x^*,y^*)\in\R^2\::\:x^*\leq0\text{ and }(x^*)^2+(y^*)^2\leq1\}.
	\]
	Since $(0,0)\in G(x,0)$ for all $x\in \R$, we deduce that the solution set of the variational inequality problem, associated to $G$ and $K$, coincides with $K$. However, the unique maximal element of $K$ is $(1,0)$.
\end{remark}

Motivated by the previous remark we establish the following result. 
\begin{proposition}\label{SVI-ME}
	Assume that $X$ is a non-empty subset of $\R^n$ and let $\succeq$ be lower semicontinuous relation on $\R^n$. Then $\SVIP(T,X)\subset \mathscr{ME}_{\succeq}(X)$.
\end{proposition}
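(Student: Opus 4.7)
The plan is to argue by contradiction. Suppose $\hat{x}\in \SVIP(T,X)$ but $\hat{x}\notin \mathscr{ME}_{\succeq}(X)$, so that there exists some $y_0\in U^s(\hat x)\cap X$. By definition of $\SVIP(T,X)$, there is some $\hat{x}^*\in T(\hat x)$ such that $\langle \hat{x}^*,y-\hat{x}\rangle\geq 0$ for all $y\in X$; in particular, $\langle \hat{x}^*,y_0-\hat{x}\rangle\geq 0$. The goal is to contradict this by showing the opposite strict inequality.

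By the definition of $T$ in~\eqref{opeT}, we may write $\hat{x}^*=\sum_{i=1}^{k}\lambda_i x_i^*$, where $\lambda_i\geq 0$, $\sum_i\lambda_i=1$ and each $x_i^*\in N(\hat{x})\cap S[0,1]$. Since $\|x_i^*\|=1$, every $x_i^*$ is nonzero. Here is where lower semicontinuity comes in: by Proposition~\ref{pro:37}\emph{1}, $N(\hat{x})=N^*(\hat{x})\cup\{0\}$, so each $x_i^*$ actually lies in $N^*(\hat{x})$. Since $y_0\in U^s(\hat{x})$, the definition of $N^*(\hat x)$ yields $\langle x_i^*,y_0-\hat{x}\rangle<0$ for every $i$, and convex combination gives $\langle \hat{x}^*,y_0-\hat{x}\rangle<0$, contradicting the Stampacchia inequality.

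There is essentially no obstacle here once Proposition~\ref{pro:37}\emph{1} is available; that proposition is what allows us to upgrade the weak inequality ``$\leq 0$'' built into the definition of $N(\hat{x})$ to the strict inequality ``$<0$'' needed to contradict ``$\geq 0$''. One minor point worth flagging in the write-up is that we may assume $U^s(\hat x)\neq\emptyset$, for otherwise $U^s(\hat{x})\cap X=\emptyset$ automatically and $\hat{x}\in\mathscr{ME}_{\succeq}(X)$ with no further work; this handles the case in which $N(\hat{x})=\R^n$ trivially.
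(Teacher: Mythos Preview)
Your argument is correct and follows essentially the same route as the paper: write $\hat{x}^*$ as a convex combination of unit vectors in $N(\hat{x})$, invoke Proposition~\ref{pro:37}\emph{1} to place each of them in $N^*(\hat{x})$, and derive a strict negative inner product against any $y_0\in U^s(\hat{x})\cap X$, contradicting the Stampacchia inequality. The only cosmetic difference is that the paper phrases the contradiction as ``some term of the convex combination must be $\geq 0$'' rather than ``the whole combination is $<0$'', which is logically equivalent.
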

\begin{proof}
	Let $\hat{x}$ be an element of $\SVIP(T,X)$. There exists $\hat{x}^*\in T(\hat{x})$ satisfying
	\begin{equation}\label{SVI-2}
		\langle \hat{x}^*,y-\hat{x}\rangle\geq0,\text{ for all }y\in X.
	\end{equation}
	By definition of $T$, there are $x_1^*,x_2^*,\dots,x_m^*\in N(\hat{x})\cap S[0,1]$ and $t_1,t_2,\dots,t_m\in [0,1]$ such that $\hat{x}^*=\sum_{i=1}^mt_ix_i^*$ and $\sum_{i=1}^m t_i=1$.
	Assume that $\hat{x}$ is not a maximal element of $\succeq$ on $X$. Then there exists $y\in X$ such that $y\succ \hat{x}$, i.e. $y\in U^s(\hat{x})$. Using inequality~\eqref{SVI-2}, we deduce $\langle x_i^*,y-\hat{x}\rangle\geq 0$ for some $i$. On the other hand, since $x_i^*\in N(\hat x)\setminus\{0\}$ and Proposition~\ref{pro:37}, item~{\it 1.}, $\langle x_i^*,y-\hat{x}\rangle<0$, a contradiction. \qed
\end{proof}

We now are able to present the main result of this subsection.
\begin{theorem}\label{existence-vip-max}
	Let $X$ be a convex, compact and non-empty subset of $\R^n$, and let $\succeq$ be a binary relation on $\R^n$. If $\succeq$ is continuous and convex$^s$; then there exists at least a maximal element for $\succeq$ on $X$.
\end{theorem}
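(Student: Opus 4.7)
The strategy is to rely on the reformulation provided by Proposition~\ref{SVI-ME}: since $\SVIP(T,X)\subset\mathscr{ME}_\succeq(X)$, it is enough to show that $\SVIP(T,X)\neq\emptyset$. The existence of a solution to a Stampacchia variational inequality on a convex, compact, non-empty subset of $\R^n$ is classical as soon as the operator is upper hemicontinuous on $X$ with non-empty, convex and compact values (via, e.g., a Kakutani--Fan--Glicksberg argument applied to the best-response correspondence $x\mapsto\{y\in X\::\:\langle x^*,y-x\rangle\le 0 \text{ for some }x^*\in T(x)\}$). So the whole task reduces to verifying these four properties for the correspondence $T$ defined in~\eqref{opeT}.

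To check them I would invoke Proposition~\ref{two-parts}. Continuity of $\succeq$ gives upper and lower semicontinuity simultaneously, so item~\emph{1} yields closedness of $T$, and item~\emph{2}, combined with the convex$^s$ assumption, yields non-emptiness of $T(x)$ for every $x\in\R^n$. Convexity of $T(x)$ is built into the $\conv$ in its definition. Since $T(\R^n)\subset\overline{B}(0,1)$ is bounded, closedness upgrades to upper hemicontinuity by the standard closed-graph criterion. For compactness of values, I would use that $N(x)$ is a closed cone, so $N(x)\cap S[0,1]$ is closed and bounded in $\R^n$, hence compact; the convex hull of a compact set in finite dimension is compact by Carath\'eodory's theorem.

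With all four hypotheses in place, the cited existence theorem provides $\hat x\in \SVIP(T,X)$, and Proposition~\ref{SVI-ME} then gives $\hat x\in\mathscr{ME}_\succeq(X)$, which is the desired conclusion. The only delicate point is really the verification of the four structural properties of $T$, and that work has been absorbed into the preliminaries: Proposition~\ref{pro:37} ensures the normal cone machinery still behaves well at points where $U^s(x)$ may be empty, and Proposition~\ref{two-parts} packages continuity and convex$^s$ into exactly the two properties needed. Consequently, no real obstacle remains; the theorem is essentially a direct verification that the standing hypotheses imply the hypotheses of a classical VI existence result, after which Proposition~\ref{SVI-ME} closes the argument.
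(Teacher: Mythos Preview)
Your proposal is correct and follows essentially the same route as the paper: verify via Proposition~\ref{two-parts} (together with the obvious observations about convexity and compactness of values) that $T$ is upper hemicontinuous with non-empty, convex, compact values, apply a standard Stampacchia existence theorem on the convex compact set $X$ (the paper cites Theorem~9.9 in~\cite{JP-Aubin-1998}), and conclude with Proposition~\ref{SVI-ME}. Your extra remarks on why closedness upgrades to upper hemicontinuity and why $T(x)$ is compact are exactly the small details the paper leaves implicit.
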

\begin{proof}
	From Proposition \ref{two-parts}, the correspondence $T$ is upper hemicontinuous with convex, compact and non-empty values. By Theorem 9.9 in \cite{JP-Aubin-1998}, there exists $\hat{x}\in\SVIP(T,X)$. Therefore, the result follows from Proposition \ref{SVI-ME}. \qed
\end{proof}

\begin{remark}\label{rem:new3.15}
	It is important to note that in Proposition~\ref{SVI-ME} we do not require convexity$^s$ of the relation, contrary to Theorem 28 in \cite{donato_variational_2023}. On the other hand, Theorem~\ref{existence-vip-max} is not a consequence of Theorem~33 in \cite{donato_variational_2023}. Indeed, consider the relation $\succeq$ on $\R$, defined as
	\[
	x\succeq y\mbox{ if, and only if, }x=y=0.
	\]
	Clearly, $\succeq$ is continuous and convex$^s$. Moreover, $T(x)=[-1,1]$, for all $x\in\R$. Hence, for any compact, convex and non-empty subset $X$ of $\R$, by Theorem~\ref{existence-vip-max}, $\mathscr{ME}_{\succeq}(X)\neq\emptyset$. However, we cannot apply Theorem 33 in \cite{donato_variational_2023}, as $\succeq$ allows for empty strict upper contours.
\end{remark}

\subsection{On the Uniqueness of Maximal Elements}
In this subsection we will show a result concerning the uniqueness of maximal elements. Before this, we need some previous results.
The first result is about necessary conditions, while the second provides sufficient conditions. Both are related to the Minty variational inequality problem.

\begin{lemma}\label{lem:3.14}
	Let $X$ be a subset of $\R^n$ and $\succeq$ be a complete relation on $\R^n$. 
	If $\hat{x}\in \mathscr{ME}_{\succeq}(X)$, then 
	\begin{equation}\label{eq:lem314}
		\langle y^*,\hat{x}-y\rangle\leq0,~\text{for all } y\in X\setminus\mathscr{ME}_{\succeq}(X)\text{ and all }y^*\in N(y).
	\end{equation}
\end{lemma}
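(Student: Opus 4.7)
The plan is to reduce the conclusion to a single fact: $\hat{x}\in U^s(y)$. Indeed, the defining inequality of $N(y)=\mathscr{N}_{U^s(y)}(y)$, evaluated at the point $z=\hat{x}\in U^s(y)$, is exactly
\[
\langle y^*,\hat{x}-y\rangle\le 0,\qquad \forall\, y^*\in N(y),
\]
which is precisely what we need. So the entire argument boils down to proving $\hat{x}\succ y$, and then invoking the definition of $N(y)$.

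To establish $\hat{x}\succ y$ I would argue in two stages. First, under completeness every maximal element is in fact a maximum of $X$: for any $z\in X$, completeness gives $\hat{x}\succeq z$ or $z\succeq\hat{x}$, and in the latter case the maximality of $\hat{x}$ excludes $z\succ\hat{x}$, forcing $\hat{x}\succeq z$ as well. In particular $\hat{x}\succeq y$. Second, since $y\notin\mathscr{ME}_{\succeq}(X)$, there exists $z\in X$ with $z\succ y$, i.e., $z\succeq y$ and $y\not\succeq z$. Combined with the $\hat{x}\succeq z$ obtained in the previous step, a transitivity-style argument rules out $y\succeq\hat{x}$: otherwise the chain $y\succeq\hat{x}\succeq z$ would yield $y\succeq z$, contradicting $y\not\succeq z$. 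Hence $y\not\succeq\hat{x}$, which together with $\hat{x}\succeq y$ gives $\hat{x}\succ y$, so $\hat{x}\in U^s(y)$, and the conclusion follows.

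The main subtlety — and the step I expect to be most delicate — is the strict part $y\not\succeq\hat{x}$. Completeness alone, together with the maximality of $\hat{x}$, only delivers $\hat{x}\succeq y$ and does not exclude the ``indifferent'' configuration in which both $\hat{x}\succeq y$ and $y\succeq\hat{x}$ hold while $y$ nevertheless fails to be maximal (a non-maximal $y$ witnessed by some $z\succ y$ need not satisfy $y\not\succeq \hat{x}$ when transitivity is absent). The argument above accordingly appears to rest on a transitivity-type property, which is not explicitly among the hypotheses of the lemma; I would flag this as the point where the proof either silently invokes rationality or needs a more careful argument to cover the indifferent case.
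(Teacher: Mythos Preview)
Your plan is exactly the paper's approach: its entire proof is the sentence ``Since $\succeq$ is complete, for any $y\in X\setminus\mathscr{ME}_{\succeq}(X)$, we have $\hat{x}\in U^s(y)$. The result follows from the definition of $N(y)$.'' So both arguments reduce to the claim $\hat{x}\succ y$, and the concern you flag about the strict part $y\not\succeq\hat{x}$ is well placed --- the paper simply asserts $\hat{x}\in U^s(y)$ from completeness without further justification, while your attempt to justify it explicitly invokes the implication $y\succeq\hat{x}\succeq z\Rightarrow y\succeq z$, which is transitivity and not among the hypotheses.

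The gap is genuine, not a matter of presentation. Consider the complete relation on $\R$ obtained by declaring $x\succeq y$ for all $x,y\in\R$ and then deleting only $1\succeq 2$ and $2\succeq 0$ (so $2\succ 1$ and $0\succ 2$, while $0$ and $1$ remain indifferent). With $X=\{0,1,2\}$ one checks $\mathscr{ME}_{\succeq}(X)=\{0\}$, $U^s(1)=\{2\}$ and hence $N(1)=(-\infty,0]$. Taking $\hat{x}=0$, $y=1$, $y^*=-1\in N(1)$ gives $\langle y^*,\hat{x}-y\rangle=1>0$, violating~\eqref{eq:lem314}. Thus the step $\hat{x}\in U^s(y)$ genuinely fails here, and so does the lemma as stated; under an added transitivity (rationality) assumption your argument is complete, and this appears to be what the paper tacitly uses.
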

\begin{proof}
	Since $\succeq$ is complete,  for any $y\in X\setminus\mathscr{ME}_{\succeq}(X)$, we have 
	$\hat{x}\in U^s(y)$. The result follows from the definition of $N(y)$. \qed
\end{proof}

The following example shows that we cannot drop the completeness of Lemma~\ref{lem:3.14}.
\begin{example}\label{ex:315}
	Consider $\succeq$ on $\R$ defined as
	\[
	x\succeq y\text{ if, and only if, }(x,y)=(0,0) \vee (x\geq y~\wedge~x\neq0),
	\]
	and let $X=\R$. 
	It is clear that $U^s(x)=]x,+\infty[$ for all $x\neq 0$, and $U^s(0)=\emptyset$, hence $\mathscr{ME}_{\succeq}(\R)=\{0\}$. Moreover, we can see that $N(x)=]-\infty,0]$ for all $x\neq0$, and $N(0)=\R$. However, $\hat x=0$ does not satisfy inequality~\eqref{eq:lem314}.
\end{example}

\begin{proposition}\label{pro:316}
	Let $\succeq$ be a binary relation on $\R^n$. If the sets $\mathscr{ME}_{\succeq}(\R^n)$ and $\MVIP(N,\R^n)$ are both non-empty, then $\mathscr{ME}_{\succeq}(\R^n)=\MVIP(N,\R^n)=\{\hat x\}$, for some $\hat x\in \R^n$.
\end{proposition}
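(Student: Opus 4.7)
The plan is to exploit the convention that $\mathscr{N}_A(x)=\R^n$ when $A=\emptyset$, which makes the normal cone at a maximal element the whole space, and then plug this into the Minty inequality to force equality.

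More concretely, I would first fix an arbitrary $\hat x\in\mathscr{ME}_{\succeq}(\R^n)$ and an arbitrary $\bar x\in\MVIP(N,\R^n)$; these exist by the hypothesis that both sets are non-empty. The goal is to prove that $\hat x=\bar x$. Since $\hat x$ is a maximal element on $\R^n$, we have $U^s(\hat x)=\emptyset$, and so by the second case in the definition of the normal cone, $N(\hat x)=\R^n$.

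Next, since $\bar x\in\MVIP(N,\R^n)$, taking $y=\hat x$ in the Minty inequality yields
\[
\langle y^*,\bar x-\hat x\rangle\leq 0,\quad\text{for all }y^*\in N(\hat x)=\R^n.
\]
Choosing the particular direction $y^*=\bar x-\hat x$ gives $\|\bar x-\hat x\|^2\leq 0$, hence $\bar x=\hat x$. Thus every element of $\MVIP(N,\R^n)$ coincides with every element of $\mathscr{ME}_{\succeq}(\R^n)$.

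Finally, I would derive the singleton conclusion from this one-line observation. If $\hat x_1,\hat x_2\in\mathscr{ME}_{\succeq}(\R^n)$, pick any $\bar x\in\MVIP(N,\R^n)$ (possible by hypothesis); the previous step yields $\bar x=\hat x_1$ and $\bar x=\hat x_2$, so $\hat x_1=\hat x_2$. The same argument, with the roles of the two sets interchanged, shows that $\MVIP(N,\R^n)$ is also a singleton, and both singletons coincide. The statement $\mathscr{ME}_{\succeq}(\R^n)=\MVIP(N,\R^n)=\{\hat x\}$ follows. There is no real obstacle here; the only point that one must not overlook is the convention $\mathscr{N}_\emptyset(x)=\R^n$, which is exactly what drives the argument and which the authors explicitly emphasize after the definition of the normal cone.
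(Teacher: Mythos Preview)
Your proof is correct and follows essentially the same approach as the paper: both exploit that $N(\hat x)=\R^n$ at a maximal element $\hat x$ (via the convention $\mathscr{N}_\emptyset(x)=\R^n$) and then use the Minty inequality at $y=\hat x$ to force any Minty solution to coincide with $\hat x$. The only cosmetic difference is that the paper argues by contradiction (``if $\hat x\neq y$, pick $y^*$ with $\langle y^*,\hat x-y\rangle>0$''), whereas you pick the explicit direction $y^*=\bar x-\hat x$ to obtain $\|\bar x-\hat x\|^2\leq 0$ directly.
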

\begin{proof}
	Let $\hat x\in \MVIP(N,\R^n)$ and $y\in \mathscr{ME}_{\succeq}(\R^n)$ be arbitrary. Since $y$ is maximal, $U^s(y)=\emptyset$ and $N(y)=\R^n$. If $\hat x\neq y$ then, there exists $y^*\in N(y)=\R^n$ such that $\inner{y^*}{\hat x-y}>0$, a contradiction with $\hat x\in \MVIP(N,\R^n)$. Hence $\hat x=y$, for all $\hat x\in \MVIP(N,\R^n)$ and all $y\in \mathscr{ME}_{\succeq}(\R^n)$. This implies the proposition. \qed
\end{proof}

We now present the main result of this subsection.
\begin{theorem}\label{preference-Minty}
	Let $\succeq$ be a complete relation on $\R^n$. If $\mathscr{ME}_{\succeq}(\R^n)\neq \emptyset$, then $\mathscr{ME}_{\succeq}(\R^n)$ is a singleton if, and only if, $\mathscr{ME}_{\succeq}(\R^n)=\MVIP(N,\R^n)$.
\end{theorem}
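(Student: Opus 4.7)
The plan is to prove both directions as straightforward consequences of the machinery already developed, specifically Lemma~\ref{lem:3.14} and Proposition~\ref{pro:316}. The key observation throughout is that a maximal element $\hat x$ satisfies $U^s(\hat x)=\emptyset$, so by the convention adopted for the normal cone of the empty set one has $N(\hat x)=\R^n$. This single fact drives both implications.

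For the necessity direction, I would assume $\mathscr{ME}_{\succeq}(\R^n)=\{\hat x\}$ and first verify that $\hat x\in \MVIP(N,\R^n)$. Since $\succeq$ is complete, Lemma~\ref{lem:3.14} gives the Minty inequality $\langle y^*,\hat x-y\rangle\leq 0$ for every $y\in\R^n\setminus\mathscr{ME}_{\succeq}(\R^n)=\R^n\setminus\{\hat x\}$ and every $y^*\in N(y)$; for the remaining point $y=\hat x$ the inequality is trivial since $\hat x-y=0$. Thus both $\mathscr{ME}_{\succeq}(\R^n)$ and $\MVIP(N,\R^n)$ are non-empty, and Proposition~\ref{pro:316} immediately yields $\mathscr{ME}_{\succeq}(\R^n)=\MVIP(N,\R^n)=\{\hat x\}$.

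For the sufficiency direction, I would assume $\mathscr{ME}_{\succeq}(\R^n)=\MVIP(N,\R^n)$. By hypothesis $\mathscr{ME}_{\succeq}(\R^n)\neq\emptyset$, so $\MVIP(N,\R^n)\neq\emptyset$ as well, and a direct application of Proposition~\ref{pro:316} collapses both sets to the same singleton.

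The only mildly delicate step is the verification that $\hat x$ belongs to $\MVIP(N,\R^n)$ in the forward direction, where completeness of $\succeq$ is essential in order to invoke Lemma~\ref{lem:3.14}; Example~\ref{ex:315} shows that without completeness this step can fail. Everything else is essentially a bookkeeping exercise once Proposition~\ref{pro:316} is in hand, so I do not expect any real obstacle beyond carefully distinguishing the cases $y=\hat x$ and $y\neq\hat x$ in the Minty inequality.
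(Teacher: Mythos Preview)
Your proposal is correct and essentially identical to the paper's own proof: the forward direction verifies $\hat x\in\MVIP(N,\R^n)$ via Lemma~\ref{lem:3.14} for $y\neq\hat x$ and trivially for $y=\hat x$, then applies Proposition~\ref{pro:316}, while the converse is immediate from Proposition~\ref{pro:316}. Your remark about the role of completeness and Example~\ref{ex:315} also matches the paper's own commentary.
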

\begin{proof}
	First assume that $\mathscr{ME}_{\succeq}(\R^n)=\{\hat x\}$, and take $y\in\R^n$ and $y^*\in N(y)$. If $y\neq \hat x$, Lemma~\ref{lem:3.14} implies $\inner{y^*}{\hat x-y}\leq 0$, and, when $y=\hat x$, trivially $\inner{y^*}{\hat x-y}= 0$. Hence $\hat x\in \MVIP(N,\R^n)$ and, by Proposition~\ref{pro:316}, $\mathscr{ME}_{\succeq}(\R^n)=\MVIP(N,\R^n)=\{\hat x\}$ . The converse implication is a direct consequence of Proposition~\ref{pro:316}. \qed
\end{proof}

\begin{remark}
	\begin{enumerate}
		\item Proposition~\ref{pro:316} implies that if either $\mathscr{ME}_{\succeq}(\R)$ or $\MVIP(N,\R)$ have at least two elements, then the other one must be empty.
		\item Example~\ref{ex:315} also implies that the completeness of $\succeq$ cannot be dropped in Theorem~\ref{preference-Minty}. Indeed, for $\succeq$ as in Example~\ref{ex:315}, $\mathscr{ME}_{\succeq}(\R)=\{0\}$ but in this case $\MVIP(N,\R)=\emptyset$.
		\item Existence of maximal elements can be guaranteed by assuming that $\succeq$ is complete, has the finite intersection property, $\bigcap_{x\in X}U(x)=\bigcap_{x\in X}\overline{U(x)}$ and $U(x)$ is bounded for some $x$
	\end{enumerate}
\end{remark}

\section{An Algorithm to Find Maximal Elements}\label{sec:algo}
This section is divided in two parts: the first one deals with the definition and properties of a normal cone operator inspired by the Plastria subdifferential~\cite{plastria_lower_1985}. The second subsection contains an algorithm for finding maximal elements of a binary relation, along with convergence results.

\subsection{The Plastria-like Normal Cone}
Given a relation $\succeq$ on $\R^n$ and a function $f:\R^n\times \R^n \to\R$ we define
\[
N_f(x):=\begin{cases}
	\{x^*\in\R^n\::\:\langle x^*,y-x\rangle\leq  f(x,y),\,\forall\,y\in U^s(x)\},&x\notin \mathscr{ME}_{\succeq}(\R^n),\\
	\R^n,&\text{otherwise}.
\end{cases}
\]
We will call $N_f(x)$ as the \emph{Plastria-like normal cone} associated to the binary relation $\succeq$ with respect to the function $f$.
Clearly, if $f=0$ then the Plastria-like normal cone $N_f$ reduces to the classical normal cone defined in~\eqref{eq:normalcone}. It is also clear that $N_f(x)$ is closed and convex, for all $x\in \R^n$. 
In general, $N_f(x)$ may not be a cone, as it is shown by the following example.
\begin{example}
	Consider $\succeq$ on $\R$ as in Example~\ref{ex:315} and functions $f_1,f_2:\R\times\R\to\R$ defined as
	\[
	f_1(x,y)=y^2-x^2\text{ and }f_2(x,y)=y-x.
	\]
	It is not difficult to show that  {the Plastria-like normal cones of $\succeq$, with respect to $f_1$ and $f_2$, respectively, are}
	\[
	N_{f_1}(x)=\begin{cases}
		]-\infty,2x],&x\neq0,\\
		\R,&x=0,
	\end{cases}\text{ and }N_{f_2}(x)=\begin{cases}
		]-\infty,1],&x\neq0,\\
		\R,&x=0.
	\end{cases}
	\]
\end{example}

The following proposition extends Proposition~25 in \cite{donato_variational_2023}.
\begin{proposition}\label{pro:new}
	Let $\succeq$ be an  upper semicontinuous relation on $\R^n$ and $f:\R^{n}\times\R^n\to\R$ be a function. If $f$ is upper semicontinuous then  {the Plastria-like normal cone} $N_f$ is closed.
\end{proposition}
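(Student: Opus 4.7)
The plan is to prove closedness by showing that the graph of $N_f$ is closed, namely: taking a sequence $(x_k,x_k^*)\in\gra(N_f)$ converging to $(x,x^*)$, one verifies $x^*\in N_f(x)$. Split into the two cases matching the definition of $N_f$. If $x\in\mathscr{ME}_{\succeq}(\R^n)$, then $N_f(x)=\R^n$ and there is nothing to check. So the substantive case is $x\notin\mathscr{ME}_{\succeq}(\R^n)$, where one must show $\langle x^*,y-x\rangle\leq f(x,y)$ for every $y\in U^s(x)$.

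Fix such a $y\in U^s(x)$, i.e.\ $y\succ x$. The paper already observed that upper semicontinuity of $\succeq$ means $L^s(\cdot)$ is open, which is equivalent to saying the correspondence $U^s$ has open fibres: the set $(U^s)^{-1}(y)=\{z:y\succ z\}=L^s(y)$ is open. Since $x\in L^s(y)$ and $x_k\to x$, for all sufficiently large $k$ we have $x_k\in L^s(y)$, that is, $y\in U^s(x_k)$. In particular $U^s(x_k)\neq\emptyset$, so $x_k\notin\mathscr{ME}_{\succeq}(\R^n)$ for large $k$, and hence the defining inequality of $N_f(x_k)$ applies:
\[
\langle x_k^*,y-x_k\rangle\leq f(x_k,y).
\]

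Now pass to the limit. The left-hand side converges to $\langle x^*,y-x\rangle$ by bilinearity and continuity of the inner product. For the right-hand side, upper semicontinuity of $f$ at $(x,y)$ applied to the sequence $(x_k,y)\to(x,y)$ yields $\limsup_k f(x_k,y)\leq f(x,y)$. Taking $\limsup$ on both sides of the inequality gives $\langle x^*,y-x\rangle\leq f(x,y)$. Since $y\in U^s(x)$ was arbitrary, $x^*\in N_f(x)$, which closes the argument.

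The main technical point, and really the only one that needs care, is the interaction between the case split in the definition of $N_f$ and the convergence: one must avoid picking indices $k$ for which $x_k$ is maximal and therefore carries no constraint on $x_k^*$. The open-fibre property of $U^s$ under upper semicontinuity of $\succeq$ resolves this cleanly, because it forces $U^s(x_k)\supseteq\{y\}$ for all large $k$, thereby ensuring that the inequality from the nontrivial branch of the definition is available exactly where it is needed. Everything else is a routine limit passage that parallels Proposition~25 of \cite{donato_variational_2023} and our Proposition~\ref{prop1}, with $f$ replacing the constant $0$.
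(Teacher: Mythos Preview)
Your proof is correct and follows the same overall strategy as the paper: verify sequential closedness of $\gra(N_f)$, split on whether $x$ is maximal, and pass to the limit using upper semicontinuity of $f$. The one technical difference is that the paper invokes lower hemicontinuity of $U^s$ to produce a sequence $y_k\in U^s(x_k)$ with $y_k\to y$ and then passes to the limit along $(x_k,y_k)\to(x,y)$, whereas you use the open-fibre property directly to conclude that the \emph{fixed} point $y$ lies in $U^s(x_k)$ for all large $k$, and pass to the limit along $(x_k,y)\to(x,y)$. Your variant is marginally simpler and, as a byproduct, shows that joint upper semicontinuity of $f$ is not really needed here: upper semicontinuity in the first variable alone suffices.
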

\begin{proof}
	Let $(x_k,x_k^*)$ be a sequence converging to $(x,x^*)$ such that $x_k\in \R^n$ and $x_k^*\in N_f(x_k)$, for all $k\in\N$. If $U^s(x)=\emptyset$, there is nothing to prove. Now, we consider that $U^s(x)\neq\emptyset$ and take any $y\in U^s(x)$. Since $U^s$ is a lower hemicontinuous correspondence, there exists a sequence $(y_k)$ converging to $y$ such that $y_k\in U^s(x_k)$. Thus,
	\[
	\langle x^*_k,y_k-x_k\rangle\leq f(x_k,y_k).
	\]
	Letting $k$ tend to $\infty$, we obtain $\langle x^*,y-x\rangle\leq f(x,y)$. Since $y$ was arbitrary, we conclude that $x^*\in N_f(x)$. Therefore, $N_f$ is closed. \qed
\end{proof}
Note that Proposition~\ref{pro:new} coincides with Proposition~\ref{prop1}, when $f=0$. 
Also, this proposition holds even if there is no connection between $\succeq$ and $f$.
In the case when $f$ and $\succeq$ are related, we obtain some other properties, listed in the following propositions.

\begin{proposition}\label{pro:newNF1}
	Let $\succeq$ be a relation on $\R^n$ and $f:\R^{n}\times\R^n\to\R$ be a function such that $f(x,y)<0$ if, and only if, $y\in U^s(x)$. The following hold:
	\begin{enumerate}
		\item $0\in N_f(x)$ if, and only if, $x\in\mathscr{ME}_\succeq(\R^n)$.
		\item If $f$ is upper semicontinuous with respect to its first variable, then $\succeq$ is upper semicontinuous.
		\item If $f$ is upper semicontinuous with respect to its first variable, and $(x_0,0)\in\overline{\gra(N_f)}$, then $0\in N_f(x_0)$.
	\end{enumerate}	
\end{proposition}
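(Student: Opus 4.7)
All three items rest on the sign hypothesis $f(x,y)<0\iff y\in U^s(x)$, which says that $f(x,\cdot)$ detects the strict upper contour via its sign. My plan is to handle items \emph{1} and \emph{2} by directly unpacking definitions, and to reduce item \emph{3} to item \emph{1} by showing that the limit point is itself maximal.

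For item \emph{1}, if $x\in\mathscr{ME}_\succeq(\R^n)$ then $N_f(x)=\R^n$ by definition, so $0\in N_f(x)$ is automatic. Conversely, if $x\notin\mathscr{ME}_\succeq(\R^n)$, I would pick any $y\in U^s(x)$ and observe that $0\in N_f(x)$ would force $0=\langle 0,y-x\rangle\leq f(x,y)$, contradicting the sign hypothesis $f(x,y)<0$.

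For item \emph{2}, I would rewrite the strict lower contour using the chain $y\in L^s(x)\iff x\succ y\iff x\in U^s(y)\iff f(y,x)<0$, which gives $L^s(x)=\{y\in\R^n \::\: f(y,x)<0\}$. Since $f$ is upper semicontinuous in its first variable, the map $y\mapsto f(y,x)$ is upper semicontinuous for each fixed $x$, so its strict sublevel set $L^s(x)$ is open; this is precisely upper semicontinuity of $\succeq$.

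For item \emph{3}, given $(x_k,x_k^*)\to(x_0,0)$ with $x_k^*\in N_f(x_k)$, the cleanest route is to prove that $x_0\in\mathscr{ME}_\succeq(\R^n)$, whence $N_f(x_0)=\R^n$ and $0\in N_f(x_0)$. I would argue by contradiction: assume some $y\in U^s(x_0)$, so $f(x_0,y)<0$. Upper semicontinuity of $x\mapsto f(x,y)$ yields $\limsup_k f(x_k,y)\leq f(x_0,y)<0$, so $f(x_k,y)<0$ eventually, i.e.\ $y\in U^s(x_k)$, so $x_k\notin\mathscr{ME}_\succeq(\R^n)$. The membership $x_k^*\in N_f(x_k)$ then gives $\langle x_k^*,y-x_k\rangle\leq f(x_k,y)$; passing to the limit forces $0\leq f(x_0,y)<0$, a contradiction. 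The main subtlety, and the reason Proposition~\ref{pro:new} cannot be invoked directly, is that here only upper semicontinuity of $f$ in the first variable is available (not joint upper semicontinuity); this is what prevents mimicking the closedness proof by chasing an arbitrary sequence $y_k\to y$ with $y_k\in U^s(x_k)$, and it is bypassed by keeping $y$ fixed and exploiting that $x^*=0$ makes the target inequality $0\leq f(x_0,y)$ unsatisfiable unless $U^s(x_0)$ is empty.
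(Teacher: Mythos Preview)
Your proposal is correct and follows essentially the same route as the paper's own proof: item~\emph{1} is unpacked from the definition, item~\emph{2} identifies $L^s(x)$ with the strict sublevel set $\{y:f(y,x)<0\}$ of an upper semicontinuous map, and item~\emph{3} argues by contradiction by fixing $y\in U^s(x_0)$, using item~\emph{2} (equivalently, upper semicontinuity of $f(\cdot,y)$) to place $y\in U^s(x_k)$ eventually, and passing to the limit in $\langle x_k^*,y-x_k\rangle\leq f(x_k,y)$ to obtain $0\leq f(x_0,y)<0$. Your closing remark on why Proposition~\ref{pro:new} does not directly apply is a welcome clarification not made explicit in the paper.
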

\begin{proof}
	\begin{enumerate}
		\item Follows directly from the definition of $N_f$.
		\item It is enough to note that upper semicontinuity of $f(\cdot,y)$ implies that the set
		\[
		L^s(y)=\{x\::\: y\succ x\}=\{x\::\: f(x,y)<0\}
		\]
		is open.
		\item If $x_0\in \mathscr{ME}_{\succeq}(\R^n)$ there is nothing to prove. Otherwise, there exists $\hat x\in U^s(x_0)$, which implies $f(x_0,\hat x)<0$ and $x_0\in L^s(\hat x)$. Let $(x_k,x_k^*)\in\gra(N_f)$, such that $x_k\to x_0$ and $x_k^*\to 0$. 
		Now, item~{\it 2} implies that $L^s(\hat x)$ is open, therefore  $x_k\in L^s(\hat x)$,
		for $k$ large enough. Since $x_k^*\in N_f(x_k)$, we deduce
		\[
		\inner{x_k^*}{\hat x-x_k}\leq f(x_k,\hat x).
		\]
		Taking the limit when $k\to\infty$, we obtain $0\leq f(x_0,\hat x)<0$, a contradiction. \qed
	\end{enumerate}
\end{proof}

\begin{proposition}\label{P-f}
	Let $\succeq$ be a relation on $\R^n$ and $f:\R^{n}\times\R^n\to\R$ be a function  such that the following hold:
	\begin{enumerate}
		\item for all $x,y\in \R^n$, if $f(x,z)>f(y,z)$, for some $z\in\R^n$, then $x\succ y$;
		\item there exists $L>0$ such that $|f(x,y)|\leq L\|x-y\|$, for all $x,y\in \R^n$.
	\end{enumerate}
Then, for every $u^*\in N^*(x_0)$, $x_0^*:=\dfrac{Lu^*}{\|u^*\|}\in N_f(x_0)$.
\end{proposition}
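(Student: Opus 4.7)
The plan is to split into cases and, in the nontrivial case, exploit the separating hyperplane provided by $u^*$ via an orthogonal projection. If $x_0\in\mathscr{ME}_\succeq(\R^n)$, then $N_f(x_0)=\R^n$ by definition and the claim is immediate. Otherwise $U^s(x_0)\neq\emptyset$, and every $u^*\in N^*(x_0)$ satisfies $\langle u^*,y-x_0\rangle<0$ strictly for $y\in U^s(x_0)$; in particular $u^*\neq 0$, so $x_0^*:=Lu^*/\|u^*\|$ is well defined. The task then reduces to verifying $\langle x_0^*,y-x_0\rangle\leq f(x_0,y)$ for every such $y$.

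The key construction is an auxiliary point. For each $y\in U^s(x_0)$, let $y^\perp$ be the orthogonal projection of $y$ onto the hyperplane $\{z:\langle u^*,z-x_0\rangle=0\}$ through $x_0$, namely
\[
y^\perp=y-\frac{\langle u^*,y-x_0\rangle}{\|u^*\|^2}\,u^*.
\]
Since $\langle u^*,y^\perp-x_0\rangle=0$ and $u^*\in N^*(x_0)$ forces strict inequality on $U^s(x_0)$, we have $y^\perp\notin U^s(x_0)$, that is, $y^\perp\not\succ x_0$. Applying the contrapositive of hypothesis~(1) with $(y^\perp,x_0)$ playing the roles of $(x,y)$ then yields $f(y^\perp,z)\leq f(x_0,z)$ for every $z\in\R^n$.

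Specializing to $z=y$ and combining with hypothesis~(2) and the identity $\|y^\perp-y\|=-\langle u^*,y-x_0\rangle/\|u^*\|$ (which uses $\langle u^*,y-x_0\rangle<0$) gives
\[
f(x_0,y)\;\geq\;f(y^\perp,y)\;\geq\;-L\|y^\perp-y\|\;=\;\frac{L\langle u^*,y-x_0\rangle}{\|u^*\|}\;=\;\langle x_0^*,y-x_0\rangle,
\]
which is the desired inequality. The main obstacle I foresee is precisely the identification of the right auxiliary point: a direct bound fails because both $\langle x_0^*,y-x_0\rangle$ and $f(x_0,y)$ are controlled only by $L\|y-x_0\|$, so one has to use the geometry of the separating hyperplane provided by $N^*(x_0)$ to produce a point whose distance to $y$ matches exactly the projection length that appears in $\langle x_0^*,y-x_0\rangle$.
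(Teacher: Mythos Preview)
Your proof is correct and follows essentially the same approach as the paper: both project $y$ onto the hyperplane $\{z:\langle u^*,z-x_0\rangle=0\}$, use the contrapositive of hypothesis~(1) to compare $f(x_0,y)$ with $f(y^\perp,y)$, and then invoke the Lipschitz bound~(2) together with the identity $\|y-y^\perp\|=-\langle u^*,y-x_0\rangle/\|u^*\|$. Your write-up is in fact slightly more explicit than the paper's (you spell out the projection formula and the contrapositive), but the argument is the same.
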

\begin{proof}
	If $x_0\in \mathscr{ME}_{\succeq}(\R^n)$, there is nothing to prove. Otherwise, $U^s(x_0)\neq  \emptyset$. Take $u^*\in N^*(x_0)$, that is, 
	\[
	\langle u^*,x-x_0\rangle <0,\text{ for all }x\in U^s(x_0).
	\]
	Now we define $x_0^*=\dfrac{L}{\|u^*\|}u^*$ and consider the hyperplane $H=\{z\in \R^n\::\:\langle u^*,z-x_0\rangle=0\}$. For each $x\in U^s(x_0)$ we consider $x'\in H$ as the projection of $x$ onto $H$. It is not difficult to verify that 
	\[
	\langle \frac{u^*}{\|u^*\|},x-x_0\rangle=-\|x-x'\|.
	\] 
	Since $x'\notin U^s(x_0)$, assumptions {\it 1.} and {\it 2.} imply $f(x_0,x)\geq f(x',x)\geq -L\|x-x'\|$. Hence
	\[
	f(x_0,x)\geq \langle x_0^*,x-x_0\rangle,
	\]
	and, therefore, $x_0^*\in N_f(x_0)$. The proof is complete. \qed
\end{proof}

\begin{remark}\label{rem:plastria}
	In the particular case when $\succeq$ is represented by a utility function $u:\R^n\to\R$ (in the sense that $x\succeq y$ if, and only if, $u(x)\geq u(y)$), we can define
	the function $f_u:\R^n\times\R^n\to\R$, $f_u(x,y):=u(x)-u(y)$. Using $f_u$, the  {Plastria-like normal cone} $N_{f_u}(x)$ coincides with the \emph{Plastria lower subdifferential} of $u$ at $x$~\cite{plastria_lower_1985}. 
	
	On the other hand, Proposition~\ref{pro:newNF1}, item~{\it 1.}, extends Theorem~3.1 in~\cite{plastria_lower_1985} and item~{\it 3.} extends Proposition~7 in~\cite{dacruzetal2011}. 
	
	Note that, using $f_u$ as above, we readily obtain condition~{\it 1.} in Proposition~\ref{P-f}. Moreover, condition~{\it 2.} is equivalent to the function $u$ be Lipschitz continuous on $\R^n$.  In view of this, Proposition~\ref{P-f} extends Theorem~20 in~\cite{censor_algorithms_2006}. 
\end{remark}

\subsection{The Algorithm}
We begin this subsection by recalling the definition of a quasi-Fej\'er monotone sequence.
A sequence $\{x_k\}\subset \R^n$ is called \emph{quasi-Fej\'er monotone} with respect to $M\subset\R^n$ if for every $u\in M$ there exists a sequence $\{\varepsilon_k\}\subset\R_+$ with $\sum \varepsilon_k<\infty$, such that
\[
\|x_{k+1}-u\|^2\leq \|x_k-u\|^2+\varepsilon_k.
\]
We state below an important result concerning quasi-Fej\'er monotone sequences~ \cite[Theorem~5.33]{bauschke_convex_2011}.
\begin{theorem}\label{Bauschke-convex}
	Let $\{x_n\}$  be a sequence in a Hilbert space $H$ and let $C$ be a non-empty subset of $H$ such that $(x_n)_{n\in\N}$ is quasi-Fej\'er  monotone with respect to $C$. Then the following hold:
	\begin{enumerate}
		\item $\{x_n\}$ is bounded.
		\item Suppose that every weak sequential cluster point of $\{x_n\}$ belongs to $C$. Then $\{x_n\}$ converges weakly to a point in $C$.
	\end{enumerate}
\end{theorem}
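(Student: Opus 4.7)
The plan is to prove a stronger intermediate fact: for every $u\in C$, the numerical sequence $\|x_k-u\|^2$ converges. This single fact immediately yields boundedness (part~1) and supplies the key ingredient for the weak convergence argument (part~2).

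For part~1, I would fix $u\in C$ and, with the associated summable sequence $\{\varepsilon_k\}$, iterate the quasi-Fej\'er inequality to obtain
\[
\|x_{k+1}-u\|^2\le \|x_0-u\|^2+\sum_{j=0}^{k}\varepsilon_j\le \|x_0-u\|^2+\sum_{j=0}^{\infty}\varepsilon_j,
\]
so $\{x_k\}$ is bounded. To upgrade this from a bound to genuine convergence of $\|x_k-u\|^2$, I would introduce the auxiliary sequence $a_k:=\|x_k-u\|^2+\sum_{j\ge k}\varepsilon_j$: the quasi-Fej\'er inequality rewrites as $a_{k+1}\le a_k$, and since $a_k\ge 0$ it converges, so $\|x_k-u\|^2$ also converges as the tail $\sum_{j\ge k}\varepsilon_j$ vanishes.

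For part~2, the boundedness from part~1 together with the Banach--Alaoglu theorem ensures that $\{x_k\}$ admits at least one weak sequential cluster point, which by hypothesis lies in $C$. Suppose $u_1,u_2\in C$ are two such cluster points, with $x_{n_k}\rightharpoonup u_1$ and $x_{m_k}\rightharpoonup u_2$. The algebraic identity
\[
\|x_k-u_1\|^2-\|x_k-u_2\|^2 = 2\inner{x_k}{u_2-u_1}+\|u_1\|^2-\|u_2\|^2
\]
has a convergent left-hand side (by the previous paragraph, applied to both $u_1$ and $u_2$), so $\inner{x_k}{u_2-u_1}$ converges to some scalar $\ell$. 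Evaluating this limit along each of the two subsequences yields $\ell=\inner{u_1}{u_2-u_1}=\inner{u_2}{u_2-u_1}$, hence $\inner{u_1-u_2}{u_2-u_1}=0$, i.e.\ $u_1=u_2$. The weak cluster point is therefore unique, and the whole sequence converges weakly to this common point of $C$.

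The only non-routine step is the trick of the auxiliary sequence $a_k$ that turns the quasi-Fej\'er inequality into true monotonicity; from there everything reduces to a textbook Opial-type argument for weak convergence. I expect this monotonization trick to be the main obstacle for a reader unfamiliar with Fej\'er-type arguments, since without it one only obtains $\liminf$ and $\limsup$ control on $\|x_k-u\|$ and cannot identify the cluster points.
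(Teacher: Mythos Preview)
Your argument is correct and is essentially the standard proof one finds in the reference the paper cites. Note, however, that the paper itself does not supply a proof of this theorem: it is simply quoted as \cite[Theorem~5.33]{bauschke_convex_2011} and used as a black box in the convergence analysis of the algorithm. So there is no ``paper's own proof'' to compare with; your write-up reproduces the classical Fej\'er/Opial argument (monotonize via the tail sum $\sum_{j\ge k}\varepsilon_j$, then identify the unique weak cluster point through the parallelogram-type identity), which is precisely the route taken in Bauschke--Combettes.
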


We are now ready to present an algorithm for finding maximal elements of preference relations defined in $\R^n$.
In the sequel, we will assume the following conditions:
\begin{enumerate}
	\item $\succeq$ is convex$^s$,
	\item there is a function $f:\R^n\times \R^n\to\R$ satisfying
	\begin{enumerate}
		\item $f(x,y)<0$ if, and only if, $y\in U^s(x)$;
		\item $f(x,y)>0$ if, and only if, $x\in U^s(y)$;
		\item there is $L>0$ such that $|f(x,y)|\leq L\|x-y\|$, for all $x, y\in \R^n$;
		\item for all $x,y\in \R^n$, $x\succ y$ if, and only if, $f(x,z)>f(y,z)$, for all $z\in\R^n$, if, and only if,  $f(x,z)>f(y,z)$, for some $z\in\R^n$; and
		\item $f$ is upper semicontinuous with respect to its first variable.
	\end{enumerate}
	\item $\mathscr{ME}_{\succeq}(\R^n)$ is non-empty.
\end{enumerate}
Under these conditions we can write an analog of the classical steepest descent method, as follows:
\begin{itemize}
\item[] {\bf Initialization:} Take $x_1\in \R^n$, arbitrarily.
\item[] {\bf Iteration:} Given $x_k$, calculate the next iterative $x_{k+1}$ from
	\[
	x_{k+1}:=x_k-\theta_kx_k^*,~k\in\mathbb{N},
	\]
	where $\theta_k$ is some positive real number such that
	\[
	\sum \theta_k=\infty\text{ and }\sum \theta_k^2<\infty,
	\]
	and $x_k^*\in N_f(x_k)$ such that $\|x_k^*\|\leq L$ with $L>0$.
\end{itemize}
Notice that item (d) of assumption~{\it 2.} in the algorithm implies assumption~{\it 1.} in Proposition \ref{P-f}. Also, items (b) and (d) of assumption~{\it 2.} imply the transitivity of $\succeq$. Moreover, assumption~{\it 2.} in the algorithm does not imply the completeness of $\succeq$ as we can see in the following example.
\begin{example}\label{ex:new46}
	Consider $\succeq$ defined as in Remark~\ref{rem:new3.15}, that is,  $x\succeq y$ if, and only if, $x=y=0$, which is clearly non-complete. However, choosing $f\equiv 0$ on $\R^2$, $f$ and $\succeq$ trivially satisfy conditions {\it 1.}, {\it 2.} and {\it 3.} in the algorithm.
\end{example}

It is important to notice that if $x_k^*=0$, for some $k$, then by Proposition~\ref{pro:newNF1}, item~{\it 1}, $x_k$ is a maximal element. From now on, we assume that the sequence $\{x_k\}$ generated by the algorithm is infinite, that is $x_k^*\neq 0$, for all $k\in\N$. 
It is clear that $\mathscr{ME}_{\succeq}(\R^n)\subset \displaystyle\bigcap_{k\in\mathbb{N}}U^s(x_k)$.
\begin{theorem}
	The sequence $\{x_k\}$ generated by the Algorithm converges to a point $\hat{x}\in \mathscr{ME}_{\succeq}(\R^n)$.
\end{theorem}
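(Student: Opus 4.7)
The plan is to adapt the classical steepest descent convergence analysis, driven by quasi-Fej\'er monotonicity with respect to the set of maximal elements. Fix any $\hat{x}\in \mathscr{ME}_{\succeq}(\R^n)$, which exists by assumption~{\it 3}. The first, and most delicate, point will be to show that $\hat{x}\in U^s(x_k)$ for every $k$. Since the algorithm produces an infinite sequence, $x_k^*\ne 0$, and by Proposition~\ref{pro:newNF1}(1) each $x_k$ is non-maximal; hence there is $z_k\in U^s(x_k)$, i.e.\ $z_k\succ x_k$. Condition~(d) of assumption~{\it 2} then gives $f(z_k,\hat{x})>f(x_k,\hat{x})$, while maximality of $\hat{x}$ rules out $z_k\succ\hat{x}$, so condition~(b) yields $f(z_k,\hat{x})\leq 0$. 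Chaining these, $f(x_k,\hat{x})<0$, and by condition~(a), $\hat{x}\in U^s(x_k)$, as desired.

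With this in hand, since $x_k^*\in N_f(x_k)$ and $\hat{x}\in U^s(x_k)$, we have $\inner{x_k^*}{\hat{x}-x_k}\leq f(x_k,\hat{x})<0$. Expanding $\|x_{k+1}-\hat{x}\|^2$ and using $\|x_k^*\|\leq L$,
\[
\|x_{k+1}-\hat{x}\|^2\leq \|x_k-\hat{x}\|^2+2\theta_k f(x_k,\hat{x})+\theta_k^2 L^2.
\]
Dropping the negative middle term gives quasi-Fej\'er monotonicity of $\{x_k\}$ with respect to $\{\hat{x}\}$ with summable error $\varepsilon_k=\theta_k^2L^2$. By Theorem~\ref{Bauschke-convex}(1) the sequence is bounded, and the standard quasi-Fej\'er argument also shows that $\{\|x_k-\hat{x}\|\}$ is convergent. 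Telescoping the displayed inequality instead yields $\sum_k 2\theta_k(-f(x_k,\hat{x}))<\infty$; combined with $\sum\theta_k=\infty$ and $-f(x_k,\hat{x})>0$, we obtain a subsequence $\{x_{k_j}\}$ with $f(x_{k_j},\hat{x})\to 0$, and, by boundedness, a further subsequence (still denoted $\{x_{k_j}\}$) with $x_{k_j}\to\bar{x}$ for some $\bar{x}\in\R^n$.

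It remains to check that $\bar{x}$ is a maximal element and that the whole sequence converges to it. By assumption~2(e), $f(\cdot,\hat{x})$ is upper semicontinuous, so $0=\lim_j f(x_{k_j},\hat{x})\leq f(\bar{x},\hat{x})$, while maximality of $\hat{x}$ together with condition~(b) forces $f(\bar{x},\hat{x})\leq 0$; hence $f(\bar{x},\hat{x})=0$. If $\bar{x}$ were not maximal, some $z\succ\bar{x}$ would produce, via~(d), $f(z,\hat{x})>f(\bar{x},\hat{x})=0$, and then~(b) would force $z\succ\hat{x}$, contradicting maximality of $\hat{x}$. Hence $\bar{x}\in\mathscr{ME}_{\succeq}(\R^n)$. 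Replaying the quasi-Fej\'er computation with $\bar{x}$ in place of $\hat{x}$ shows $\{\|x_k-\bar{x}\|\}$ converges; since $x_{k_j}\to\bar{x}$ forces this limit to equal zero, we conclude $x_k\to\bar{x}$.

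The main obstacle is the first step: without any completeness of $\succeq$, one must exploit the algebraic properties~(b) and~(d) of $f$ to convert the mere existence of some element above $x_k$ into the assertion that the fixed reference $\hat{x}$ itself lies in $U^s(x_k)$. Once that is secured, everything follows the familiar Fej\'er/steepest-descent template, with condition~(e) powering the upper-semicontinuity passage to the limit and~(d) again ruling out non-maximal cluster points.
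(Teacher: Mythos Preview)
Your proof is correct and follows essentially the same quasi-Fej\'er route as the paper's own argument: the paper records $\mathscr{ME}_{\succeq}(\R^n)\subset\bigcap_k U^s(x_k)$ just before the theorem (which you justify carefully via conditions (a), (b), (d)), then expands $\|x_{k+1}-\hat{x}\|^2$, telescopes to get $\sum\theta_k(-f(x_k,\hat{x}))<\infty$, and uses upper semicontinuity of $f(\cdot,\hat{x})$ together with (d) and (b) to show the cluster point is maximal. The only cosmetic difference is that the paper closes by invoking part~{\it 2.} of Theorem~\ref{Bauschke-convex}, whereas you replay the quasi-Fej\'er estimate with the limit point $\bar{x}$ directly; both yield convergence of the full sequence.
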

\begin{proof}
	Notice that
	\[
	\|x_{k+1}-x_k\|^2=\theta_k^2\|x_k^*\|^2= L^2\theta_k^2,
	\]
	which implies the series $\sum \|x_{k+1}-x_k\|^2$ is convergent. On the other hand,
	\begin{equation}\label{algo1}
		\|x_{k+1}-u\|^2=\|x_k-u\|^2+\|x_{k+1}-x_k\|^2+2\theta_k\langle x_k^*,u-x_k\rangle
	\end{equation}
	for all $k\in\mathbb{N}$ and all $u\in \R^n$. Taking $\hat{x}\in \mathscr{ME}_{\succeq}(\R^n)$ in the previous equality we have
	\[
	\|x_{k+1}-\hat{x}\|^2\leq \|x_k-\hat{x}\|^2+\|x_{k+1}-x_k\|^2.
	\]
	Thus, the sequence $\{x_k\}$ is quasi-Fejer monotone with respect to $\mathscr{ME}_{\succeq}(\R^n)$ with $\varepsilon_k=\|x_{k+1}-x_k\|^2$. Moreover, from~\eqref{algo1} we obtain
	\[
	\|x_{k+1}-\hat{x}\|^2\leq\|x_k-\hat{x}\|^2+\|x_{k+1}-x_k\|^2+2\theta_kf(x_k,\hat{x}),
	\]
	which in turn implies
	\[
	-2\theta_kf(x_k,\hat{x})\leq \left(\|x_k-\hat{x}\|^2-\|x_{k+1}-\hat{x}\|^2\right)+\|x_{k+1}-x_k\|^2.
	\]
	Since both series $\sum \|x_{k+1}-x_k\|^2$ and $\sum \left(\|x_k-\hat{x}\|^2-\|x_{k+1}-\hat{x}\|^2\right)$ are convergent,  we obtain
	\[
	\sum -2\theta_kf(x_k,\hat{x})<\infty.
	\]
	Hence $\lim f(x_k,\hat{x})=0$. 
	
	Finally, part~{\it 1.} of Theorem~\ref{Bauschke-convex} implies that $\{x_k\}$ is bounded. Without loss of generality, we may assume that $x_k$ converges to some $x_0$, so, due to the upper semicontinuity of $f(\cdot,\hat{x})$,  $f(x_0,\hat{x})\geq 0$. If $x_0$ is not maximal, there exists $z\in X$ such that $z\succ x_0$. Thus, $f(z,\hat{x})>f(x_0,\hat{x})\geq0$, which in turn implies $z\succ \hat{x}$ and we get a contradiction because $\hat{x}$ is a maximal element. The result now follows from part~{\it 2.} of Theorem \ref{Bauschke-convex}. \qed
\end{proof}

\begin{remark}
	Assume that $\succeq$ is represented by a utility function $u:\R^n\to\R$, and consider $f_u$ as in Remark~\ref{rem:plastria}, that is, $f_u(x,y):=u(x)-u(y)$. If $u$ is quasiconcave and Lipschitz, and possesses a maximum on $\R^n$, then $f_u$ satisfy assumptions~{\it 1.}, {\it 2.} and {\it 3.} of our algorithm. In view of this, our algorithm generalizes Algorithm~A1 in~\cite{dacruzetal2011}, as we do not require differentiability of $u$.
\end{remark}

\section{Conclusions}
In this work, we improved some existence results on maximal elements via a variational approach. 
In particular, we compared our results with Proposition~2.4 in~\cite{MILASI2019} and Theorem~3 in~\cite{MILASI2021}. We have identified a flaw in these results, which we described in Remarks~\ref{Milasi-e} and \ref{Milasi-e2}.  Moreover, we pointed out that our main result is not a consequence of Theorems~28 and~33 in~\cite{donato_variational_2023}. 
We also characterized the uniqueness of maximal elements, by studying the solutions of a certain Minty variational inequality problem.

Finally, we adapted the classical steepest descent method, where we use a new kind of normal cone, called Plastria-like normal cone, to replace the usual derivative. In this way, we established an algorithm to find maximal elements. This algorithm extends Algorithm~A1 in~\cite{dacruzetal2011}.

\end{document}